\renewcommand\section{\@startsection{section}{1}{\z@}%
                       {-3\p@ \@plus -4\p@ \@minus -4\p@}%
                       {3\p@ \@plus 4\p@ \@minus 4\p@}%
                      {\normalfont\normalsize\centering\scshape}}
\author{Lashi Bandara and Ognjen Milatovic}
\title{Self-adjointness of the Gaffney Laplacian on vector bundles}
\date{\today}
\subjclass[2010]{58J60,46E35,46E40,47F05}
\keywords{Gaffney Laplacian,
Bochner Laplacian,
geodesically incomplete manifold,
negligible boundary,
essential self-adjointness,
Sobolev space,
polar boundary}
\address{Lashi Bandara, Mathematical Sciences,
Chalmers University of Technology and University of Gothenburg, SE-412 96, Gothenburg, Sweden}
\urladdr{\href{http://www.math.chalmers.se/~lashitha}{http://www.math.chalmers.se/~lashitha}}
\email{\href{mailto:lashi.bandara@chalmers.se}{lashi.bandara@chalmers.se}}
\address{
Ognjen Milatovic,
Department of Mathematics and Statistics,
University of North Florida,
Jacksonville, FL  32224,
USA}
\urladdr{\href{http://www.unf.edu/~omilatov}{http://www.unf.edu/~omilatov}}
\email{\href{mailto:omilatov@unf.edu}{omilatov@unf.edu}}
\def\colour{\colour}
\def\colour{\color}
\newtheorem{theorem}{Theorem}[section]
\newtheorem{corollary}[theorem]{Corollary}
\newtheorem{lemma}[theorem]{Lemma}
\newtheorem{proposition}[theorem]{Proposition}
\newtheorem{remark}[theorem]{Remark}
\newtheorem{question}{Question}
\newcommand{\mdot}{\cdotp}
\newcommand{\cbrac}[1]{\left(#1\right)}
\newcommand{\dbrac}[1]{\left\{#1\right\}}
\newcommand{\modulus}[1]{|#1|}
\newcommand{\set}[1]{\dbrac{#1}}
\newcommand{\nset}[1]{\{#1\}}
\newcommand{\dom}{ {\mathcal{D}}}
\newcommand{\R}{\mathbb{R}}
\newcommand{\C}{\mathbb{C}}
\newcommand{\script}[1]{\mathscr{#1}}
\renewcommand{\emptyset}{\varnothing}
\newcommand{\intersect}{\cap}
\newcommand{\rest}[1]{{{\lvert_{}}_{}}_{#1}}
\newcommand{\close}[1]{\overline{#1}}		
\renewcommand{\epsilon}{\varepsilon}
\renewcommand{\phi}{\varphi}
\newcommand{\isomorphic}{\cong}			
\newcommand{\tensor}{\otimes}
\newcommand{\norm}[1]{\| #1 \|}			
\newcommand{\spt}[1]{{\rm spt} {\text{ }}#1}	
\DeclareMathOperator{\tr}{tr}			
\DeclareMathOperator{\divv}{div}		
\newcommand{\Sect}{\mathbf{\Gamma}}		
\newcommand{\tanb}{{\rm T}}		
\newcommand{\cotanb}{{\rm T}^\ast}	
\DeclareFontFamily{OT1}{restrictfont}{}
\DeclareFontShape{OT1}{restrictfont}{m}{n}{<-> fmvr8x}{}
\newcommand{\adj}[1]{{#1}^\ast}			
\newcommand{\extd}{{\rm d}}			
\newcommand{\inprod}[1]{\langle #1 \rangle}	
\newcommand{\grad}{\nabla}			
\newcommand{\conn}[1][{}]{{\grad_{{#1}}}}		
\newcommand{\Lp}[2][{}]{{\rm L}^{#2}_{\rm #1}}		
\newcommand{\Ck}[2][{}]{{\rm C}^{#2}_{\rm #1}}		
\newcommand{\Sob}[2][{}]{{\rm W}^{#2}_{\rm #1}}		
\DeclareMathOperator{\Cpc}{\textrm{Cap}}	
\newcommand{\Hil}{\script{H}}			
\newcommand{\Lap}{\Delta}			
\newcommand{\sD}{\script{D}}
\newcommand{\sE}{\script{E}}
\newcommand{\sR}{\script{R}}
\newcommand{\cV}{\mathcal{V}}
\newcommand{\cM}{\mathcal{M}}
\newcommand{\mg}{\mathrm{g}}
\newcommand{\mh}{\mathrm{h}}
\newcommand{\RNum}[1]{\uppercase\expandafter{\romannumeral #1\relax}}
\begin{document}

\maketitle

\begin{abstract}
We study the Gaffney Laplacian on
a vector bundle
equipped with a compatible 
metric and
connection over a
Riemannian manifold
that is possibly 
geodesically incomplete.
Under the hypothesis
that the Cauchy boundary is polar, 
we demonstrate the self-adjointness 
of this Laplacian.
Furthermore, we show that 
negligible boundary is a necessary and sufficient condition 
for the self-adjointness of this operator.
\end{abstract}
\vspace*{-0.5em}
\tableofcontents
\vspace*{-2em}

\parindent0cm
\setlength{\parskip}{\baselineskip}

\section{Introduction}

The study of the essential
self-adjointness of the Laplace-Beltrami operator $\Delta$ on a
geodesically complete Riemannian manifold $(\cM,\mg)$ in $\Lp{2}(\cM)$ and
the standard $\Lp{2}$-spaces of differential forms was
initiated by Gaffney in \cite{Gaffney}. Later, also in the setting of geodesically
complete Riemannian manifolds, Cordes in \cite{Cordes} proved the essential
self-adjointness of positive integer powers of the operator
$\Delta$ on $\Ck[c]{\infty}(\cM)$,  the space of  smooth compactly supported functions on $\cM$.
In contrast to Cordes' ``stationary" approach,
Chernoff in \cite{Chernoff} used a wave equation method to establish the essential
self-adjointness of positive integer powers of the Laplace operator on differential forms.

In \cite{Bandara-13}, the first author considered
the case of a vector bundle $\cV$
equipped with a metric $\mh$ and connection $\conn$ that are compatible,
over a geodesically complete manifold.
On factorising certain ``density problems''
in terms of a first-order operator
and applying the results of Chernoff from \cite{Chernoff},
he established the
density of $\Ck[c]{\infty}(\cV)$
in the Sobolev space $\Sob{1,2}(\cV)$ defined as the closure of
\begin{equation}\label{E:S-1}
S_1(\cV):=\{u \in \Ck{\infty} \intersect \Lp{2}(\cV)\colon \conn u \in \Ck{\infty} \intersect \Lp{2}(\cotanb\cM \tensor \cV)\}
\end{equation}
under the Sobolev norm $\norm{\mdot}_{\Sob{1,2}} = \norm{\mdot} + \norm{\conn \mdot}$, where $\|\cdot\|$ stands for the $\Lp{2}$-norm (see \S\ref{S:prelim} for details). 
Additionally, the author of~\cite{Bandara-13} established
the density of $\Ck[c]{\infty}(\cotanb\cM \tensor \cV)$
in the domain of
the divergence operator (the adjoint of $\conn$ in $\Lp{2}$)
as well as the essential self-adjointness
of the Bochner Laplacian $-\tr \conn^2$
on $\Ck[c]{\infty}(\cV)$,  the trace with respect
to the metric $\mg$ of the Hessian $\conn^2$ (c.f. Chapter 2 in \cite{Petersen} by Petersen).

In the context of Riemannian manifolds $(\cM, \mg)$ that are possibly
geodesically incomplete,  Masamune in \cite{Masamune} studied the essential
self-adjointness of the
Laplace-Beltrami operator
on functions. In particular, he showed that the essential
self-adjointness is equivalent to the \emph{negligible boundary property}
(see \S\ref{S:neg-bound}).
In the same paper, he showed that this
property is equivalent to the equality $\Sob[0]{1,2}(\cM)=\Sob{1,2}(\cM)$
(where the space $\Sob[0]{1,2}(\cM)$ is the closure of $\Ck[c]{\infty}(\cM)$
with respect to the Sobolev norm).

On a related note,
it turns out that the zero capacity of the Cauchy boundary of a
Riemannian manifold implies $\Sob[0]{1,2}(\cM)=\Sob{1,2}(\cM)$;
see \cite{Masamune-99, Masamune} by Masamune and
\cite{GM} by Grigor'yan and Masamune. A related study of the essential
self-adjointness of the sub-Laplacian on a sub-Riemannian manifold
with the pseudo-0-negligible boundary property can be found in
\cite{Masamune-05-CPDE} by Masamune.

In the present paper, we consider a
vector bundle $\cV$ with a compatible
metric $\mh$ and connection
$\conn$ over
a Riemannian manifold $(\cM,\mg)$
(without boundary) that is possibly geodesically incomplete.
First, by studying the effect of
zero capacity of the Cauchy boundary on $(\cV,\mh,\conn)$,
we show
that $\Sob[0]{1,2}(\cV) = \Sob{1,2}(\cV)$ (Theorem \ref{T:sob-equal}).

We then proceed to extend the notion of negligible boundary to $(\cM,\cV,\conn)$. Our fundamental object of study is the operator $\Delta_{G}:=\adj{\conn[c]}\,\close{\conn[2]}$, where $\adj{\conn[c]}$ is the operator adjoint of $\conn[c]:=\conn|_{\Ck[c]{\infty}(\cV)}$ and
$\close{\conn[2]}$ is the closure of $\conn[2] = \conn|_{S_1(\cV)}$ with  $S_1(\cV)$ as in~(\ref{E:S-1}); see \S\ref{S:prelim} for details. In the context of scalar-valued functions, $\Lap_G$ appears in \cite{GM}
by Grigor'yan and Masamune under the name \emph{Gaffney Laplacian}.
 We remark that, Gaffney actually did not study
this Laplacian, but to avoid inconsistencies with the literature,
we will retain this nomenclature in our more general setting. 
In Theorem~\ref{T:self-G}, we show that the self-adjointness of $\Delta_{G}$
is equivalent to the negligible boundary property, which is, in turn,
equivalent to the equality $\Sob[0]{1,2}(\cV)=\Sob{1,2}(\cV)$.
Additionally, in Theorem~\ref{T:esa-Lap-G-infty}, which is a
Bochner Laplacian analogue of Theorem 3 in \cite{Masamune},
we establish the equivalence between essential self-adjointness of
the Bochner Laplacian and negligible boundary.
 We emphasise that
our results in particular hold for the Laplacian on vectorfields,
co-vector fields, and more generally tensor fields, without requiring
underlying connection to be Levi-Civita. 

Our analysis
rests upon an application of a version of
integration by parts, which enables us to establish useful links among the first-order
operators entering the definitions of Dirichlet, Neumann, and
Gaffney Laplacians; see Theorem~\ref{T:op-equality} below. Other
useful links among those operators are obtained by extracting information
from the negligible boundary property. Additionally, in
the proof of the essential self-adjointness
of the Bochner Laplacian (Theorem~\ref{T:esa-Lap-G-infty}),
we adapt the heat equation method of
Masamune in \cite{Masamune,Masamune-05-CPDE} to our setting.
\section*{Acknowledgements}

The first author acknowledges the
support of the Australian National University
and the Australian Research Council
through Discovery Project DP120103692 of McIntosh and Portal,
as well as his current institution,
Chalmers University of Technology/University of Gothenburg, Sweden.
 The authors are grateful to the anonymous referee
for providing us with valuable suggestions and helping us improve the presentation of the material. 
\section{Preliminaries}\label{S:prelim}

Let $\cM$ be a smooth manifold
with a smooth metric $\mg$.
We emphasise that $\cM$
is a manifold \emph{without boundary}
and that $\mg$ is possibly \emph{geodesically incomplete}, 
 by which we mean that it may be incomplete as a metric space. 
We denote the induced volume measure by $\mu_\mg$.
We assume that $\cV$ is a smooth vector bundle
over $\cM$, equipped with a smooth metric
$\mh$.

Recall that a map 
$\conn: \Ck{\infty}(\cV) \to \Ck{\infty}(\cotanb\cM \tensor \cV)$
is called a \emph{connection} if
it satisfies $\conn[fX]v = f\conn[X]v$
and $\conn[X](fv) = \extd f \tensor v + f \conn[X]v$
where $f \in \Ck{\infty}(\cM)$, $X \in \Ck{\infty}(\tanb\cM)$, 
$v \in \Ck{\infty}(\cV)$ and $\extd f$ is the
exterior derivative of $f$.
A connection $\conn$ and metric $\mh$
are \emph{compatible} if $\conn \mh \equiv 0$.
We fix a connection $\conn$ on $\cM$
(by which we mean on $\tanb\cM$) and
by the same symbol, we denote a connection 
on $\cV$, compatible with $\mg$ and $\mh$
respectively. We do not assume that
$\conn$ on $\cM$ is \emph{Levi-Civita}; that
is, we allow for the existence of vectorfields
$X, Y \in \Ck{\infty}(\cotanb\cM)$
such that $\conn[X]Y \neq [X,Y]$
(where $[\mdot,\mdot]: \Ck{\infty}(\tanb\cM \times \tanb\cM) \to \Ck{\infty}(\tanb\cM)$
is the \emph{Lie bracket}). 

We define the Lebesgue spaces $\Lp{p}(\cV)$
for $p \geq 1$,
although we only require this
theory for the case $p = 2$. First, we note
some measure theoretic notions.
We say that a set $A \subset \cM$
is measurable if, whenever $(U, \psi)$
is a chart and $U \intersect A \neq \emptyset$,
then the set $\psi(A \intersect U)$ is Lebesgue measurable.
Thus, we define the space of measurable
sections $\Sect(\cV)$, where we
say that a section is measurable
if its coefficients are measurable
when seen through trivialisations.
We remark that
this notion of measurability is equivalent
to measurability with respect to the
induced measure $\mu_\mg$. See \S3 of \cite{B-Rough} for details.

For $p \in [1,\infty)$,
the set $\Lp{p}(\cM; \cV)$ (or $\Lp{p}(\cV)$ for short)
is defined as
the set of sections $\xi \in \Sect(\cV)$
such that
$$ \int_{\cM} \modulus{\xi(x)}_{\mh(x)}^p\ d\mu_\mg(x) < \infty.$$
We bestow $\Lp{p}(\cV)$ with the norm
$\norm{\xi}_{p}
=\cbrac{\int_{\cM} \modulus{\xi}_{\mh}^p\ d\mu_\mg}^{\frac{1}{p}}.$
In the case of $p = \infty$,
we define $\Lp{\infty}(\cV)$ to consist of sections
$\xi \in \Sect(\cV)$ such that there exists $C > 0$
with $\modulus{\xi(x)}_{\mh(x)} \leq C$  for
$x$-a.e. The norm $\norm{\xi}_\infty$ is
the infimum over all such constants $C$.
Each of these spaces is a Banach space
(strictly speaking, modulo
sections which differ on a set of measure zero).
The space $\Lp{2}(\cV)$ is a Hilbert space
with inner product
$$
\inprod{\xi,\zeta} = \int_{\cM} \mh_x(\xi(x), \zeta(x))\ d\mu_\mg(x)$$
for $\xi, \zeta \in \Lp{2}(\cV)$.
We assume these spaces are complex
valued by identifying a real space with
its complexification.
The \emph{local} $\Lp{p}$ spaces
are denoted by  $\Lp[loc]{p}(\cV)$
and they contain sections
$\xi \in \Sect(\cV)$ satisfying:
$\xi \in \Lp{p}(K, \cV)$
for every open $K \subset \cM$
with $\close{K}$ compact.

Let
$$S_k^p(\cV) = \set{u \in \Ck{\infty} \intersect \Lp{p}(\cV):
	\conn^i u \in \Ck{\infty} \intersect \Lp{p}((\tensor_{j=1}^i \cotanb\cM) \tensor \cV),\ i = 1, \dots, k},$$
and define
$\Sob{k,p}(\cV)$ as the closure of $S_k^p(\cV)$ under the norm
$$ \norm{u}_{\Sob{k,p}} = \norm{u}  + \sum_{i=1}^k \norm{ \conn^i u}.$$
Furthermore, let
$\Sob[0]{k,p}(\cV)$ denote the closure of $\Ck[c]{\infty}(\cV)$
under the same norm.
We define $\Sob[c]{k,p}(\cV)$ to be sections $\xi \in \Sob[0]{k,p}(\cV)$
with support $\spt \xi$ compact. The local Sobolev spaces
$\Sob[loc]{k,p}(\cV)$ consist of sections $\xi \in \Sect(\cV)$
such that $\xi \in \Sob{k,p}(K, \cV)$ for every
open $K \subset \cM$ with $\close{K}$ compact.
From here on, unless otherwise stated,
we will be solely concerned with the case $p = 2$.
Hence, we denote $S_k^2(\cV)$ by $S_k(\cV)$.

We also note the following characterisation:
\begin{equation*}\label{R:sob-eq}
\tag{W}
\Sob{1,2}(\cV) = \set{u \in \Lp{2}(\cV): \conn u \in \Lp{2}(\cotanb \cM \tensor\cV)}.
\end{equation*}

For this, we cite Theorem 2 (i) in \cite{Masamune},
which is a version of~(\ref{R:sob-eq}) for functions.
The proof generalises with only
superficial modifications to the vector bundle setting as it relies purely
upon the properties of Friedrichs mollification; see the proof of Proposition 1.4 in~\cite{E-88}. 

Define $\conn[c]: \Ck[c]{\infty}(\cV) \to \Ck[c]{\infty}(\cotanb\cM \tensor \cV)$
and $\conn[2]: S_1(\cV) \to \Ck{\infty}(\cotanb\cM \tensor \cV) \intersect \Lp{2}(\cotanb\cM \tensor \cV)$
by $\conn[c] = \conn$ with domain  $\dom(\conn[c]) = \Ck[c]{\infty}(\cV)$ and
$\conn[2] = \conn$ with domain $\dom(\conn[2]) = S_1(\cV)$.

In particular, the compatibility of $\conn$
with $\mh$ induces
the following adjoint formulae:
\begin{alignat*}{4}
&\inprod{u, -\tr\conn[c] v}
&&= \inprod{\conn[2]u, v},
&&\quad\text{where}\ u \in S_1(\cV),\ v \in \Ck[c]{\infty}(\cotanb\cM\tensor \cV),
\tag{N}
\label{Eq:AdjointN}
\\
&\inprod{w, -\tr\conn[2] z}
&&= \inprod{\conn[c]w, z},
&&\quad\text{where}\ w \in \Ck[c]{\infty}(\cV),\ z \in S_1(\cotanb\cM\tensor \cV).
\tag{D}
\label{Eq:AdjointD}
\end{alignat*}
 We emphasise that the compatibility of $\conn$ with $\mh$ is
essential as the results we obtain crucially 
depend on these adjoint formulae. 

Since $\Ck[c]{\infty}(\cV)$ is dense in $\Lp{2}(\cV)$,
we obtain by standard theory (i.e. Theorem III.5.28 in \cite{Kato} by Kato)
that the operators
$\conn[c]$, $\conn[2]$, $-\tr \conn[c]$ and $-\tr \conn[2]$
are densely-defined and closable.
Hence, we define the following operators:
\begin{align*}
&\conn[D] := \close{\conn[c]},\quad  &&\conn[N] := \close{\conn[2]}, \\
&\divv_{D} := \adj{-\conn[c]},\quad &&\divv_{N} := \adj{-\conn[2]}, \\
\ &\divv_{D,-} := \close{\tr\conn[2]},\quad &&\divv_{N,-} := \close{\tr\conn[c]}.
\end{align*}

First, observe the following.
\begin{proposition}\label{P:basic-incl}
The following operator inclusions hold:
$\conn[D] \subset \conn[N]$,
$\divv_N \subset \divv_D$,
$\divv_{D,-} \subset \divv_D$, $\divv_{N,-} \subset \divv_N$.
\end{proposition}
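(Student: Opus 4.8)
The plan is to verify each of the four inclusions directly from the definitions, exploiting the adjoint formulae (N) and (D) together with the relation between the adjoint of a densely-defined operator and the adjoint of its restriction.

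First, for the inclusion $\conn[D] \subset \conn[N]$: since $\dom(\conn[c]) = \Ck[c]{\infty}(\cV) \subset S_1(\cV) = \dom(\conn[2])$ and the two operators agree on $\Ck[c]{\infty}(\cV)$, we have $\conn[c] \subset \conn[2]$ as operators. Taking closures preserves inclusion, so $\close{\conn[c]} \subset \close{\conn[2]}$, which is precisely $\conn[D] \subset \conn[N]$. The same reasoning gives $\divv_{D,-} = \close{\tr\conn[2]} \subset \close{\tr\conn[c]}$? No --- here we must be careful: $\tr\conn[2]$ has domain $S_1(\cotanb\cM \tensor \cV)$ and $\tr\conn[c]$ has domain $\Ck[c]{\infty}(\cotanb\cM \tensor \cV) \subset S_1(\cotanb\cM\tensor\cV)$, so in fact $\tr\conn[c] \subset \tr\conn[2]$, hence $\divv_{N,-} = \close{\tr\conn[c]} \subset \close{\tr\conn[2]} = \divv_{D,-}$. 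So one should instead derive $\divv_{D,-} \subset \divv_D$ and $\divv_{N,-} \subset \divv_N$ from the adjoint formulae as follows.

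For $\divv_{N,-} \subset \divv_N$: recall $\divv_N = \adj{-\conn[2]}$. By the adjoint formula (N), for $u \in S_1(\cV)$ and $v \in \Ck[c]{\infty}(\cotanb\cM\tensor\cV)$ we have $\inprod{\conn[2] u, v} = \inprod{u, -\tr\conn[c] v}$, which shows that every $v \in \Ck[c]{\infty}(\cotanb\cM\tensor\cV)$ lies in $\dom(\adj{-\conn[2]})$ with $\adj{-\conn[2]} v = -\tr\conn[c] v$; that is, $-\tr\conn[c] \subset \adj{-\conn[2]} = \divv_N$. Since $\divv_N$ is closed (adjoints are always closed), taking the closure of the left side preserves the inclusion, giving $\divv_{N,-} = \close{\tr\conn[c]} \subset \divv_N$. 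Symmetrically, using (D) with $w \in \Ck[c]{\infty}(\cV)$ and $z \in S_1(\cotanb\cM\tensor\cV)$, every $z \in S_1(\cotanb\cM\tensor\cV)$ satisfies $\inprod{\conn[c] w, z} = \inprod{w, -\tr\conn[2] z}$, so $-\tr\conn[2] \subset \adj{-\conn[c]} = \divv_D$, and closedness of $\divv_D$ yields $\divv_{D,-} = \close{\tr\conn[2]} \subset \divv_D$. Finally, for $\divv_N \subset \divv_D$: this is the dual statement of $\conn[D] \subset \conn[N]$. Indeed, $\conn[c] \subset \conn[2]$ implies $\adj{-\conn[c]} \supset \adj{-\conn[2]}$ by the order-reversing property of adjoints (if $T \subset S$ are densely defined then $\adj{S} \subset \adj{T}$), which is exactly $\divv_D \supset \divv_N$.

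The only genuine subtlety --- and the step most likely to trip up a careless reader --- is keeping the direction of each inclusion straight: restriction of domains reverses under the adjoint operation but is preserved under closure, and one must use the adjoint formulae (N) and (D) precisely to identify $-\tr\conn[c]$ and $-\tr\conn[2]$ as restrictions of the abstract adjoints $\divv_N$ and $\divv_D$ respectively before invoking closedness. No estimates or analytic input beyond the already-established adjoint identities and the standard functional-analytic facts about closable operators and their adjoints are needed.
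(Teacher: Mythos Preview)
Your approach is exactly what the paper intends: the proposition is stated there without proof, left as immediate from the definitions, and your argument spells out precisely that verification. The inclusions $\conn[D] \subset \conn[N]$ and $\divv_N \subset \divv_D$ are handled correctly via $\conn[c] \subset \conn[2]$ together with the order-reversal of adjoints.

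There is, however, a sign slip in your treatment of the last two inclusions. From (N), $\inprod{\conn[2] u, v} = \inprod{u, -\tr\conn[c] v}$ gives $\adj{\conn[2]}\, v = -\tr\conn[c] v$, hence $\adj{(-\conn[2])}\, v = \tr\conn[c] v$, not $-\tr\conn[c] v$ as you wrote. Thus $\tr\conn[c] \subset \divv_N$, and closedness of $\divv_N$ yields $\divv_{N,-} = \close{\tr\conn[c]} \subset \divv_N$; the same correction applies verbatim to $\divv_{D,-} \subset \divv_D$ using (D). Your final conclusions are correct and the method is sound; only the intermediate sign needs fixing, so this is a bookkeeping slip rather than a gap.
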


Moreover, we obtain a characterisation
of the Sobolev spaces in terms of
$\conn_D$ and $\conn_N$.

\begin{proposition}\label{P:basic-eql}
The Sobolev space
$\Sob{1,2}(\cV) = \dom(\conn[N])$ and
the Sobolev space
$\Sob[0]{1,2}(\cV) = \dom(\conn[D]).$
\end{proposition}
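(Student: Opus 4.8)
The plan is to observe that both equalities are, in essence, a restatement of the definition of the closure of an operator. Recall that for a densely-defined closable operator $T$ on a Hilbert space, $\dom(\close{T})$ is precisely the completion of $\dom(T)$ in the graph norm $u \mapsto (\norm{u}^2 + \norm{Tu}^2)^{1/2}$, canonically identified with a subspace of the ambient Hilbert space via $u \mapsto (u, \close{T}u)$; this identification is well defined exactly because closability forces $u_n \to 0$ together with $Tu_n \to w$ to imply $w = 0$. I would apply this with $T = \conn[2]$ (domain $S_1(\cV)$) and with $T = \conn[c]$ (domain $\Ck[c]{\infty}(\cV)$), both of which are closable as established above. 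Then $\dom(\conn[N])$ is the graph-norm completion of $S_1(\cV)$ and $\dom(\conn[D])$ is the graph-norm completion of $\Ck[c]{\infty}(\cV)$. Since on these domains the graph norm of $\conn$ is equivalent to $\norm{u}_{\Sob{1,2}} = \norm{u} + \norm{\conn u}$ — indeed $\tfrac{1}{\sqrt{2}}(\norm{u} + \norm{\conn u}) \le (\norm{u}^2 + \norm{\conn u}^2)^{1/2} \le \norm{u} + \norm{\conn u}$ — equivalent norms yield the same completion, and the two completions are $\Sob{1,2}(\cV)$ and $\Sob[0]{1,2}(\cV)$ by definition.

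Unwound concretely for the first equality: $u \in \dom(\conn[N])$ if and only if there is a sequence $(u_n) \subset S_1(\cV)$ with $u_n \to u$ in $\Lp{2}(\cV)$ and $(\conn u_n)$ convergent in $\Lp{2}(\cotanb\cM \tensor \cV)$, which is to say $(u_n)$ is Cauchy for $\norm{\cdot}_{\Sob{1,2}}$ with $\Lp{2}$-limit $u$; this is exactly membership of $u$ in the $\norm{\cdot}_{\Sob{1,2}}$-closure of $S_1(\cV)$, i.e. $u \in \Sob{1,2}(\cV)$. The reverse inclusion $\Sob{1,2}(\cV) \subseteq \dom(\conn[N])$ is immediate from the definition of $\Sob{1,2}(\cV)$ as the $\norm{\cdot}_{\Sob{1,2}}$-closure of $S_1(\cV) = \dom(\conn[2])$. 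Alternatively, the inclusion $\dom(\conn[N]) \subseteq \Sob{1,2}(\cV)$ can be read off from the characterisation~\eqref{R:sob-eq}: if $u_n \to u$ and $\conn u_n \to w$ in $\Lp{2}$, then for any $\eta \in \Ck[c]{\infty}(\cotanb\cM \tensor \cV)$ the adjoint formula~\eqref{Eq:AdjointN} gives $\inprod{w, \eta} = \lim_n \inprod{\conn u_n, \eta} = \lim_n \inprod{u_n, -\tr\conn[c]\eta} = \inprod{u, -\tr\conn[c]\eta}$, so $w$ is the distributional $\conn u$; hence $\conn u = w \in \Lp{2}$ and $u \in \Sob{1,2}(\cV)$ by~\eqref{R:sob-eq}.

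The second equality is proved verbatim, with $S_1(\cV)$ replaced by $\Ck[c]{\infty}(\cV)$, $\conn[2]$ replaced by $\conn[c]$, and $\conn[N]$ by $\conn[D]$; the only thing to note is that on $\Ck[c]{\infty}(\cV)$ one has $\conn[c] = \conn = \conn[2]|_{\Ck[c]{\infty}(\cV)}$, so the Sobolev norm and the graph norm again agree there. Proposition~\ref{P:basic-incl} plays no role in either argument.

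The only point requiring a modicum of care — and the step I would flag as the main (though mild) obstacle — is ensuring that the completions $\Sob{1,2}(\cV)$ and $\Sob[0]{1,2}(\cV)$ are realised as genuine subspaces of $\Lp{2}(\cV)$, rather than as spaces of equivalence classes of Cauchy sequences or of pairs $(u,w)$. This realisation, equivalently the validity of~\eqref{R:sob-eq}, is exactly where the closability of $\conn[2]$ and $\conn[c]$ enters: it guarantees that the limiting datum $w$ in $(u,w) = \lim_n (u_n, \conn u_n)$ is uniquely determined by $u$, so that the projection onto the first factor is injective and the identification $\dom(\conn[N]) \cong \Sob{1,2}(\cV)$ (and likewise for $\conn[D]$) is canonical. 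Once that bookkeeping is settled, there is no substantive analytic content left and the proposition follows.
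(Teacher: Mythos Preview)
Your argument is correct and matches the paper's perspective: the paper states this proposition without proof, treating it as an immediate consequence of the definitions of $\Sob{1,2}(\cV)$, $\Sob[0]{1,2}(\cV)$ as norm-closures and of $\conn[N]$, $\conn[D]$ as operator closures. Your write-up simply makes explicit the identification of the graph-norm completion with the $\norm{\mdot}_{\Sob{1,2}}$-closure, which is exactly the content the paper is taking for granted.
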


We remark that the operators $\divv_D$ and $\divv_N$ can be obtained
as closed, densely-defined operators even if the compatibility
assumption on $\mh$ and $\conn$ is dropped.
This is a consequence of the well known
fact that operators $\conn_c$ and $\conn_2$
are always densely-defined and closable (c.f.~Proposition 2.2 in \cite{BMc}).
In particular, this means that Proposition \ref{P:basic-eql}
is valid even in this more general context.
However, the inclusions $\tr\conn_c \subset \divv_N$
and $\tr\conn_2 \subset \divv_D$ may no longer hold.
In particular, we cannot
assert that $\divv_D$ and $\divv_N$ are differential
operators.

Dropping the compatibility requirement
becomes crucial when attempting to study
Sobolev spaces in the setting of low-regularity
metrics, for the simple reason that
the metric may not be differentiable.
Some initial progress
in this direction can be found in \cite{B-Rough}
for the special case of Sobolev spaces
of functions under so-called ``rough metrics.''
These considerations are beyond the scope
of this paper and we will always assume
compatibility between the metric and connection
unless otherwise stated.

Define the following two self-adjoint
operators, which we respectively call the \emph{Dirichlet} and
\emph{Neumann} Laplacian:
\begin{align*}
\Lap_D := -\divv_D \conn[D]\ \text{and}\ \Lap_N := -\divv_N \conn[N].
\end{align*}
On writing the \emph{energy} associated to our Sobolev spaces,
namely,
$$ E_D[u] = \int_{\cM} \modulus{\conn[D]u}^2\ d\mu_\mg
\ \text{and}\
E_N[v] = \int_{\cM} \modulus{\conn[N]v}^2\ d\mu_\mg,$$
for $u \in \Sob[0]{1,2}(\cV)$ and $v \in \Sob{1,2}(\cV)$,
it is immediate that
$\Sob[0]{1,2}(\cV) = \dom(\sqrt{\Lap_D})$
and $\Sob{1,2}(\cV) = \dom(\sqrt{\Lap_N})$.
Note that when $\cM = U \subset \R^n$,
where $U$ is the interior of a bounded
Lipschitz domain and $\cV = \cM \times \C$, the bundle of functions,
then $\Lap_D$ and $\Lap_N$ denote
the classical Dirichlet and Neumann Laplacians respectively.
This justifies our notation and nomenclature
in this more general setting.

We also consider the composition of
operators $-\tr\conn[2]$ and $\conn[2]$:
\[
\Lap_{s}:=(-\tr\conn[2])\conn[2],
\]
with the induced domain
$\dom(\Lap_s) = \set{u \in S_1(\cV): \conn u \in S_1(\cotanb\cM \tensor \cV)}$.
We call this operator the \emph{Bochner Laplacian}.
Let $\Lap_{D,s}$ and $\Lap_{N,s}$ be the restrictions of $\Lap_{D}$ and $\Lap_{N}$ to $\dom(\Lap_{D})\cap \Ck{\infty}(\cV)$ and
$\dom(\Lap_{N})\cap \Ck{\infty}(\cV)$ respectively. The following operator relations easily follow from definitions and Proposition~\ref{P:basic-incl}:
\begin{equation*}\label{E:D-N-s}
\tag{L}
\Lap_{D,s}\subset \Lap_{s}\quad\textrm{ and }\quad\Lap_{N,s}\subset \Lap_{s}.
\end{equation*}

Furthermore, a fundamental object
of our study will be the following (not necessarily symmetric) operator:
\begin{align*}
\Lap_G := -\divv_D \conn[N].
\end{align*}
 
As aforementioned, we call this object the Gaffney Laplacian
to keep consistency with the literature. 
\section{General results}\label{S:gen-results}

Let us consider an additional
$\Lp{2}$-differential operator $\conn[L] := \adj{(-\divv_{N,-})}$.
Recall the containment
$\divv_{N,-} \subset \divv_N \subset \divv_D$
from Proposition \ref{P:basic-incl}.
On taking adjoints
we obtain that
$ \conn[D] \subset \conn[N] \subset \conn[L].$
We will show that $\conn[L] = \conn[N]$,
but first, we present the
following abstract lemma
inspired by this endeavour.

\begin{lemma}
\label{Lem:AdjDom}
Let $\Hil_1$ and $\Hil_2$ be Hilbert spaces,
with inner products $\inprod{\mdot,\mdot}_1$
and $\inprod{\mdot,\mdot}_2$ respectively.
Let $\sR_s \subset \Hil_2$ and  $\sR \subset \Hil_2$, be dense subspaces
satisfying $\sR_s \subset \sR$.
Suppose that:
\begin{enumerate}[(i)]
\item $\sR$ is equipped with a norm $\norm{\mdot}_{\sR}$
	satisfying: there exists $c > 0$ such that
	 $\norm{u}_2 \leq c \norm{u}_{\sR}$
	for all $u \in \sR$,
\item the inner product
	$\inprod{\mdot,\mdot}_2$ extends
	continuously from $\sR_s$ to the
	pairing (separably continuous bilinear map)
	$$\inprod{\mdot,\mdot}: \sR \times \sR' \to \C,$$
	where $\sR'$ is the dual space of $\sR$,
\item  $T: \dom(T) \subset \Hil_1 \to \Hil_2$ and $\tilde{T}: \Hil_1 \to \sR'$ 
	and $\tilde{T}\rest{\dom(T)} = T$, 
\item  $S: \dom(S) \subset \Hil_2 \to \Hil_1$, $\sR_s \subset \dom(S) \subset \sR$,
	and $\dom(S)$ is dense in $(\Hil_2, \norm{\mdot}_2)$
	as well as $(\sR, \norm{\mdot}_{\sR})$, and 
\item $\inprod{u, Sv}_1 = \inprod{\tilde{T}u,v}$
	for all $u \in \Hil_1$ and $v \in \dom(S)$.
\end{enumerate}
Then, $\dom(\adj{S}) = \nset{u \in \Hil_1: \tilde{T}u \in \Hil_2}$.
\end{lemma}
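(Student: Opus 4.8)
The plan is to establish the two inclusions $\dom(\adj{S}) \supseteq \nset{u \in \Hil_1: \tilde{T}u \in \Hil_2}$ and $\dom(\adj{S}) \subseteq \nset{u \in \Hil_1: \tilde{T}u \in \Hil_2}$ separately, using hypothesis (v) as the bridge between the abstract pairing and the genuine $\Hil_2$-inner product.

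For the first inclusion, suppose $u \in \Hil_1$ with $\tilde{T}u \in \Hil_2$. By (v), for every $v \in \dom(S)$ we have $\inprod{u, Sv}_1 = \inprod{\tilde{T}u, v}$. Now I would argue that since $\tilde{T}u \in \Hil_2$, the pairing $\inprod{\tilde{T}u, v}$ coincides with the genuine inner product $\inprod{\tilde{T}u, v}_2$ for $v \in \dom(S) \supseteq \sR_s$: indeed by (ii) the pairing extends $\inprod{\mdot,\mdot}_2$ from $\sR_s$, and $\dom(S)$ is dense in $(\sR, \norm{\mdot}_{\sR})$ by (iv), so both sides of $\inprod{w, v}_2 = \inprod{w, v}$ (for fixed $w = \tilde{T}u \in \Hil_2 \cap \{\text{suitable space}\}$) are $\norm{\mdot}_{\sR}$-continuous in $v$ and agree on the dense subspace $\sR_s$, hence on all of $\dom(S)$. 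Therefore $\inprod{u, Sv}_1 = \inprod{\tilde{T}u, v}_2$ for all $v \in \dom(S)$, and since $v \mapsto \inprod{\tilde{T}u, v}_2$ is $\norm{\mdot}_2$-bounded, this exhibits $u \in \dom(\adj{S})$ with $\adj{S}u = \tilde{T}u$.

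For the reverse inclusion, suppose $u \in \dom(\adj{S})$, so there exists $f := \adj{S}u \in \Hil_2$ with $\inprod{u, Sv}_1 = \inprod{f, v}_2$ for all $v \in \dom(S)$. Combining with (v) gives $\inprod{\tilde{T}u, v} = \inprod{f, v}_2$ for all $v \in \dom(S)$. The goal is to conclude $\tilde{T}u = f$ as elements of $\sR'$, which then forces $\tilde{T}u \in \Hil_2$ since $f \in \Hil_2$. Here I would again use that $\inprod{f, \mdot}_2$ extends from $\sR_s$ to the pairing $\inprod{f, \mdot}$ (valid because $f \in \Hil_2$ and the extension in (ii) is of $\inprod{\mdot,\mdot}_2$), so $\inprod{\tilde{T}u - f, v} = 0$ for all $v \in \dom(S)$; since $\dom(S)$ is $\norm{\mdot}_{\sR}$-dense in $\sR$ and $\tilde{T}u - f \in \sR'$ is $\norm{\mdot}_{\sR}$-continuous, it vanishes on all of $\sR$, i.e. $\tilde{T}u = f \in \Hil_2$.

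The main obstacle is the careful bookkeeping in (ii): one must verify that for $w \in \Hil_2$ the functional $v \mapsto \inprod{w,v}_2$ really does extend to the pairing $\inprod{w, \mdot}$ on $\sR$ (equivalently, that $\Hil_2$ embeds into $\sR'$ compatibly via the pairing), and that this embedding is consistent with the identification $\tilde{T}\rest{\dom(T)} = T$ in (iii). In other words, the crux is checking that "$\tilde{T}u \in \Hil_2$" is an unambiguous statement — that membership of an element of $\sR'$ in $\Hil_2$ is well-defined — and that under this identification the pairing genuinely reduces to the $\Hil_2$-inner product. Once that identification is nailed down, both inclusions are short density arguments using (iv). No compactness or completeness beyond what is packaged into the hypotheses is needed; the argument is purely a duality-and-density manipulation.
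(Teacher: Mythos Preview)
Your proposal is correct and follows essentially the same route as the paper: both inclusions are obtained by combining hypothesis (v) with the embedding $\Hil_2 \hookrightarrow \sR'$ (coming from (i) and (ii)) and then invoking the $\norm{\mdot}_{\sR}$-density of $\dom(S)$ in $\sR$ from (iv) to identify $\tilde{T}u$ with an element of $\Hil_2$. The paper phrases the reverse inclusion via the Riesz representation theorem to produce the element $z \in \Hil_2$, whereas you take $f = \adj{S}u$ directly, but this is the same argument, and the ``main obstacle'' you flag is exactly the point the paper handles.
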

\begin{proof}

Let $\dom = \nset{u \in \Hil_1: \tilde{T}u \in \Hil_2}$.
Combining (iii) and (v), it is easy to see that that $S$ and $T$
are adjoint to each other and therefore, $\dom \subset \dom(\adj{S})$.

To prove the converse, first
recall that $\dom(\adj{S})$ can be characterised
as the set of $u \in \Hil_1$ for which
$v \mapsto \inprod{u, Sv}_1$ is continuous.
Here, continuity is measured
with respect to the
topology induced by $\norm{\mdot}_2$ on $\Hil_2$.

Fix $u \in \dom(\adj{S})$. Then,
$\inprod{u, Sv}_1 = \inprod{\tilde{T}u,v}$
by (v) and $v \mapsto \inprod{\tilde{T}u,v}$ is
continuous in $\Hil_2$.
Letting $f_u$ be this map,
continuity is equivalent to
 $\modulus{f_u(v)} = \modulus{\inprod{\tilde{T}u,v}} \leq C \norm{v}_{2}$
for some $C > 0$ whenever $v \in \dom(S)$.
Since $\dom(S)$ is dense in $\Hil_2$ by (iv), we have that $f_u(v)$ extends
to the whole of $\Hil_2$ as a bounded map. That is,
$f_u \in \Hil_2'$ and hence,
by the Riesz representation theorem,
there exists $z \in \Hil_2$ such that
$f_u(v) = \inprod{z,v}_2$ for all $v \in \Hil_2$.
Now, note $\modulus{f_u(v)} \leq C \norm{v}_2 \leq cC \norm{v}_{\sR}$
where the second inequality follows form (i)
and hence, $\Hil_2 \isomorphic \Hil_2' \subset \sR'$. That is, $z \in \sR'$. 
Since $\inprod{\mdot,\mdot}$ extends $\inprod{\mdot,\mdot}_2$ from $\sR_s$
via continuity,
we obtain that $\inprod{\tilde{T}u,v} = \inprod{z,v}_2 = \inprod{z, v}$
for all $v \in \dom(S)$ by invoking (ii). By the density of $\dom(S)$
in $\sR$ guaranteed by (iv), we can deduce that $\tilde{T}u = z$
and $\tilde{T}u \in \Hil_2$ since $z \in \Hil_2$. This proves $u \in \dom$. 
\end{proof}

We remark that the formulation of this
lemma is to provide a tool to compute
the domain of an operator when there
are distributional tools
in hand. In application,
we will see that $\sR'$ represents
the space of distributional sections,
$\sR$ an appropriate Sobolev space,
and $\sR_s$, the space of compactly
supported smooth sections.
We have phrased this lemma in this generality
in the hope that it will be useful
beyond the scope of our immediate
applications in this paper.

Let $\sE'(\cV)$ be the dual space of
$\Ck{\infty}(\cV)$ and $\sD'(\cV)$ the dual
space of $\Ck[c]{\infty}(\cV)$.
Define
$\Sob[comp]{1,2}(\cV) := \Sob[loc]{1,2}(\cV) \intersect \sE'(\cV)$.
The choice of the notation ``comp'' in the definition
is because $\Sob[comp]{1,2}(\cV)$ is
exactly the $\Sob{1,2}(\cV)$
sections with compact support, 
which is a consequence of the fact that
 $\sE'(\cV)$ is the
space of compactly supported distributional sections; see Exercise 2.3.5 in~\cite{BC-2009} by van den Ban and Crainic.

\begin{lemma}
\label{Lem:Dual}
The $\Lp{2}(\cV)$ inner product $\inprod{\mdot,\mdot}$
extends continuously to a pairing
$$
\inprod{\mdot,\mdot}:\Sob[comp]{1,2}(\cV) \times \Sob[loc]{-1,2}(\cV) \to \C$$
from $\Ck[c]{\infty}(\cV)$ by continuity.
Furthermore, the equality
\begin{equation*}\label{E:i-parts}
\tag{P}
\inprod{\conn u, v} = \inprod{u, -\tr \conn v}
\end{equation*}
holds if
one of $\spt u$  or $\spt v$ is compact, and
either
\begin{enumerate}[(i)]
\item $u \in \Lp[loc]{2}(\cV)$ and $v \in \Sob[loc]{1,2}(\cotanb \cM \tensor \cV)$, or
\item $u \in \Sob[loc]{1,2}(\cV)$ and $v \in \Lp[loc]{2}(\cotanb \cM \tensor \cV)$.
\end{enumerate}
\end{lemma}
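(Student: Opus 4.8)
The plan is to prove the two assertions in turn, with the continuous extension of the pairing (first statement) serving as the technical backbone for the integration-by-parts formula (second statement).

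\textbf{The pairing.} Recall that $\Sob[loc]{-1,2}(\cV)$ consists of the distributional sections $\xi$ with $\chi\xi \in \Sob{-1,2}(\cV) := (\Sob[0]{1,2}(\cV))'$ for every $\chi \in \Ck[c]{\infty}(\cM)$. Given $u \in \Sob[comp]{1,2}(\cV)$ with $\spt u$ contained in a compact $K$, fix $\chi \in \Ck[c]{\infty}(\cM)$ equal to $1$ near $K$, and for $v \in \Sob[loc]{-1,2}(\cV)$ set $\inprod{u,v} := \inprod{u,\chi v}$, the duality pairing of $\Sob[0]{1,2}(\cV)$ with $\Sob{-1,2}(\cV)$ (note $\Sob[comp]{1,2}(\cV) \subset \Sob[0]{1,2}(\cV)$ by mollification). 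Independence of $\chi$ follows from $\chi u = u$; the estimate $\modulus{\inprod{u,v}} \leq \norm{u}_{\Sob{1,2}}\norm{\chi v}_{\Sob{-1,2}}$ gives separate continuity; and for $u,v \in \Ck[c]{\infty}(\cV)$ the pairing equals $\int_{\cM}\mh(u,\chi v)\,d\mu_\mg = \int_{\cM}\mh(u,v)\,d\mu_\mg$. Uniqueness of the extension from $\Ck[c]{\infty}(\cV)$ holds because $\Ck[c]{\infty}(\cV)$ is dense in $\Sob[comp]{1,2}(\cV)$ (Friedrichs mollification keeping supports in a fixed compact set) and in $\Sob[loc]{-1,2}(\cV)$ (since $\Ck[c]{\infty}(\cV)$ is dense in $\Lp[loc]{2}(\cV)$, which is dense in $\Sob[loc]{-1,2}(\cV)$). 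The identical construction, applied to any bundle, also produces the transposed pairing $\Sob[comp]{-1,2}(\cV) \times \Sob[loc]{1,2}(\cV) \to \C$; in either form the point is that the pairing is defined and separately continuous whenever one factor has compact support and the two Sobolev orders add up to $0$, which is exactly what is needed to interpret $\inprod{\conn u, v}$ below.

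\textbf{The identity, classical case.} If $u$ and $v$ are smooth on a neighbourhood of a compact set containing $\spt u$ or $\spt v$, then compatibility of $\conn$ with $\mh$ supplies a smooth vector field $W$ with $\divv W = \mh(\conn u, v) - \mh(u, \tr\conn v)$ pointwise; integrating and applying the divergence theorem, with one of the supports compact, gives $\inprod{\conn u, v} = \inprod{u, -\tr\conn v}$, the local form of the adjoint identities~\eref{Eq:AdjointN} and~\eref{Eq:AdjointD}.

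\textbf{The identity, general case.} It suffices to treat case~(i) with $\spt u$ compact; case~(ii), and the subcases where $\spt v$ is compact, follow symmetrically by interchanging the roles and the topologies. Let $u_\epsilon \in \Ck[c]{\infty}(\cV)$ be Friedrichs mollifications of $u$ with supports in a fixed compact neighbourhood $K'$ of $\spt u$, so $u_\epsilon \to u$ in $\Lp{2}(\cV)$ and hence $\conn u_\epsilon \to \conn u$ in $\Sob[comp]{-1,2}(\cV)$ (the first-order operator $\conn$ being bounded from $\Lp{2}$-sections supported in $K'$ into $\Sob{-1,2}(\cV)$); let $v_\delta$ be interior mollifications of $v$ on a neighbourhood of $K'$ with $v_\delta \to v$ in $\Sob{1,2}$ there, so $-\tr\conn v_\delta \to -\tr\conn v$ in $\Lp{2}(K')$. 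Applying the classical case to the smooth pair $(u_\epsilon, v_\delta)$ and letting $\delta \to 0$ (both sides being $\Lp{2}(K')$ integrals) yields $\inprod{\conn u_\epsilon, v} = \inprod{u_\epsilon, -\tr\conn v}$, and then $\epsilon \to 0$ — using separate continuity of the transposed pairing on the left and the convergence $u_\epsilon \to u$ in $\Lp{2}(K')$ against $-\tr\conn v \in \Lp{2}(K')$ on the right — gives $\inprod{\conn u, v} = \inprod{u, -\tr\conn v}$.

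\textbf{Main difficulty.} The pointwise divergence identity is routine once compatibility is available, so the genuine work is bookkeeping: arranging the (in general two-fold) mollification so that every support stays inside one fixed compact set, and checking that in each of the four cases the convergences $u_\epsilon \to u$, $\conn u_\epsilon \to \conn u$, $v_\delta \to v$, $-\tr\conn v_\delta \to -\tr\conn v$ take place in spaces against which the non-mollified factor pairs continuously. For the negative-order pairings this is precisely where the first part of the lemma is used, and one must track which variable is mollified globally (preserving a compact support) versus locally.
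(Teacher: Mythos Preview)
The paper does not actually prove this lemma: immediately after the statement it simply refers the reader to Lemma~9.2.9 in the van den Ban--Crainic lecture notes for the pairing, and to Lemma~8.8 of Braverman--Milatovic--Shubin (with \S7 of Shubin's book as a further reference) for the identity~\eqref{E:i-parts}. Your proposal, by contrast, supplies a self-contained argument via cutoff localisation and two-stage Friedrichs mollification, which is essentially the standard route those references take.

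Your argument is correct. The delicate points are precisely the ones you isolate in your ``main difficulty'' paragraph --- keeping all supports inside a single fixed compact set so that every pairing is well defined, and checking that each limit (in $\epsilon$ and in $\delta$) occurs in a topology that pairs continuously against the non-mollified factor --- and you handle these. One small remark: in the pairing paragraph you also construct the transposed pairing $\Sob[comp]{-1,2}(\cV)\times\Sob[loc]{1,2}(\cV)\to\C$, which is not part of the lemma as stated but is exactly what is needed to interpret $\inprod{\conn u,v}$ in case~(i) with $\spt u$ compact (since then $\conn u\in\Sob[comp]{-1,2}(\cotanb\cM\tensor\cV)$ by locality of $\conn$). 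It would be cleaner to flag this explicitly as a harmless extension of the first assertion rather than fold it silently into the proof.
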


For the statement about the pairing in Lemma~\ref{Lem:Dual}, see Lemma 9.2.9 in~\cite{BC-2009}.
For the equality~(\ref{E:i-parts}), see Lemma 8.8 in
the paper~\cite{BMS} by Braverman, the
second author and Shubin. A good reference for
similar results is \S7 (particularly
\S7.7) and Theorem 7.7 in \cite{Shubin} by Shubin.

\begin{theorem}\label{T:op-equality}
The following operator equalities hold:
\begin{enumerate}
\item[(i)] $\conn[L] = \conn[N]$
\item[(ii)] $\divv_{N}=\divv_{N,-}$
\item [(iii)]$\divv_{D}=\divv_{D,-}$
\end{enumerate}
\end{theorem}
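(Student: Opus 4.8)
The plan is to prove all three equalities by a single application of \lemref{Lem:AdjDom}, using the integration-by-parts machinery of \lemref{Lem:Dual}. I will set up the abstract lemma with $\Hil_1$ and $\Hil_2$ the relevant $\Lp{2}$ bundles, $\sR$ the ambient Sobolev space $\Sob{1,2}$, $\sR_s$ the smooth compactly supported sections, $\sR'$ the local negative Sobolev space $\Sob[loc]{-1,2}$ (which plays the role of the distributional sections), and $\tilde T$ the full distributional connection (or full distributional $-\tr\conn$), with $T$ its restriction to a chosen $\Lp{2}$ operator domain. Condition (i) of \lemref{Lem:AdjDom} is the Sobolev-norm estimate $\norm{u}_2 \le \norm{u}_{\Sob{1,2}}$; condition (ii) is precisely the pairing statement of \lemref{Lem:Dual}; conditions (iii) and (v) are the distributional integration-by-parts identity \eref{E:i-parts}, valid because one of the two sections is compactly supported; and condition (iv) is the density of $\Ck[c]{\infty}$ in both $\Lp{2}$ and $\Sob{1,2}$. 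The conclusion of \lemref{Lem:AdjDom} then identifies the domain of the relevant adjoint operator with the set of $\Lp{2}$ sections whose distributional image again lies in $\Lp{2}$, which is exactly a Sobolev-space condition.

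For (i), I take $S = -\divv_{N,-} = -\close{\tr\conn[c]}$, so $\adj{S} = \conn[L]$, with $\dom(S)$ containing $\Ck[c]{\infty}(\cotanb\cM\tensor\cV)$ and contained in $\Sob{1,2}(\cotanb\cM\tensor\cV)$; here $\tilde T$ is the distributional connection on $\cV$. By \lemref{Lem:Dual}\,(ii) the identity $\inprod{\conn u, v} = \inprod{u,-\tr\conn v}$ holds for all $u\in\Lp{2}(\cV)$ and $v\in\dom(S)$ (one of the supports being compact on the generating set $\Ck[c]{\infty}$, then extended), verifying hypothesis (v). \lemref{Lem:AdjDom} then gives $\dom(\conn[L]) = \set{u\in\Lp{2}(\cV): \conn u\in\Lp{2}} = \Sob{1,2}(\cV) = \dom(\conn[N])$ by the characterisation \eref{R:sob-eq} and \propref{P:basic-eql}; combined with the inclusion $\conn[N]\subset\conn[L]$ already noted before the lemma, this forces $\conn[L] = \conn[N]$. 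For (ii), recalling $\divv_N = \adj{-\conn[2]}$ and $\divv_{N,-} = \close{\tr\conn[c]}$, I apply the lemma with $S = -\conn[2]$ (so $\adj{S} = \divv_N$) and $\tilde T$ the distributional $-\tr\conn$, obtaining $\dom(\divv_N) = \set{w: -\tr\conn w\in\Lp{2}}$; since $\divv_{N,-}$ is the closure of $\tr\conn[c]$ and is a differential operator with this same maximal domain once one checks $\divv_{N,-}\subset\divv_N$ (\propref{P:basic-incl}) is onto that set, equality follows. Statement (iii) is handled symmetrically, swapping the roles of $\conn[c]$ and $\conn[2]$: take $S = -\conn[c]$ so $\adj{S} = \divv_D$, use \eref{E:i-parts}\,(ii) with a compactly supported test section, and identify $\dom(\divv_D)$ with the maximal distributional domain of $-\tr\conn$, which coincides with $\dom(\divv_{D,-}) = \dom(\close{\tr\conn[2]})$.

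The routine parts are the bookkeeping of which Hilbert space sits where and checking the elementary density and norm-estimate hypotheses (iv) and (i) of \lemref{Lem:AdjDom}, all of which are standard. The main obstacle I anticipate is verifying hypothesis (v) — the identity $\inprod{u,Sv}_1 = \inprod{\tilde T u, v}$ \emph{for all $u\in\Hil_1$}, not merely on a smooth dense subspace — because $Sv$ lies in $\Lp{2}$ by assumption but $\tilde T u$ only in $\sR' = \Sob[loc]{-1,2}$, so the pairing on the right is a priori only the duality pairing, and one must argue that it agrees with the $\Lp{2}$ inner product whenever the right-hand input happens to be $\Lp{2}$. This is exactly what \lemref{Lem:Dual} supplies: the $\Lp{2}$ inner product extends \emph{continuously} to the $\Sob[comp]{1,2}\times\Sob[loc]{-1,2}$ pairing from $\Ck[c]{\infty}$, and the integration-by-parts formula \eref{E:i-parts} holds under the stated support-and-regularity hypotheses. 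So the real content is choosing the test domains $\dom(S)$ so that every $v\in\dom(S)$ together with every $u\in\Hil_1$ falls into one of the two admissible cases of \lemref{Lem:Dual} (compact support on one side, $\Sob[loc]{1,2}$ regularity on the other), and this is arranged by noting $\dom(S)$ is squeezed between $\Ck[c]{\infty}$ and $\Sob{1,2}$ and passing to the limit by the continuity of the pairing.
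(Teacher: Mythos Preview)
Your overall strategy --- computing adjoints via \lemref{Lem:AdjDom} with the pairing from \lemref{Lem:Dual} --- is the paper's strategy, but the execution has genuine gaps. The central one is the choice $\sR = \Sob{1,2}$. Your own identification $\sR' = \Sob[loc]{-1,2}$ is inconsistent with this: $\Sob[loc]{-1,2}$ is the dual of $\Sob[comp]{1,2}$, and that is what the paper actually takes for $\sR$. Compact support is not cosmetic here. The integration-by-parts identity \eref{E:i-parts} in \lemref{Lem:Dual} requires one side to have compact support, and your verification of hypothesis~(iv) of \lemref{Lem:AdjDom} --- density of $\Ck[c]{\infty}$ in $\sR$ --- would, with $\sR = \Sob{1,2}$, be the statement $\Sob[0]{1,2} = \Sob{1,2}$, which is precisely what is \emph{not} assumed on an incomplete manifold. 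With $\sR = \Sob[comp]{1,2}$ this density is a local mollification fact and is fine. Relatedly, for (i) you take $S = -\close{\tr\conn[c]}$ and claim $\dom(S) \subset \Sob{1,2}(\cotanb\cM\tensor\cV)$; this is false, since the graph norm of $\tr\conn$ does not control the full connection. Your proposed extension of \eref{E:i-parts} to $v \in \dom(S)$ ``by continuity of the pairing'' therefore fails: convergence of $v_n$ in the graph norm of $\tr\conn$ gives no control on the $\Sob[loc]{-1,2}$--$\Sob[comp]{1,2}$ pairing $\inprod{\conn u, v_n}$. The paper avoids all of this by taking $S = -\tr\conn[c]$ with $\dom(S) = \Ck[c]{\infty} = \sR_s$ (legitimate since $\adj{\close A} = \adj{A}$), so hypotheses (iv) and (v) are immediate.

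The most serious problem is your direct attack on (ii) with $S = -\conn[2]$. Here $\dom(S) = S_1(\cV)$ has no compact support, so it is not contained in $\Sob[comp]{1,2}$ and \eref{E:i-parts} does not apply to pairs $u\in\Lp{2}(\cotanb\cM\tensor\cV)$, $v\in S_1(\cV)$; hypothesis (v) cannot be verified. Your fallback assertion that $\dom(\divv_{N,-})$ equals the maximal domain $\{w:\tr\conn w\in\Lp{2}\}$ is precisely the nontrivial content you are trying to prove. The paper instead gets (ii) for free from (i) by taking adjoints: $\conn[L]=\conn[N]$ implies $\adj{\conn[L]}=\adj{\conn[N]}$, i.e.\ $-\divv_{N,-}=-\divv_N$. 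For (iii) your setup with $S=-\conn[c]$ is the paper's, yielding $\dom(\divv_D)=\{u:\tr\conn u\in\Lp{2}\}$; the remaining identification with $\dom(\divv_{D,-})$ rests (in both your sketch and the paper's) on the same Friedrichs-mollification density underlying \eref{R:sob-eq}.
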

\begin{proof}
Recall that $\conn[L] = \adj{(-\close{\tr\conn[c]})} = \adj{(-\tr\conn[c])}$.
First, we use Lemma \ref{Lem:Dual} and Lemma \ref{Lem:AdjDom}
and show that $\dom(\conn[L]) = \set{u \in \Lp{2}(\cV): \conn u \in \Lp{2}(\cotanb \cM \tensor \cV)}$.
To that end, let
$\Hil_1 = \Lp{2}(\cV)$, $\Hil_2 = \Lp{2}(\cotanb\cM \tensor \cV)$,
$S = -\tr\conn[c]$ with $\sR_s = \dom(S) = \Ck[c]{\infty}(\cotanb\cM \tensor \cV)$
and $T = \conn[2]$.
Now, let $\sR = \Sob[comp]{1,2}(\cotanb\cM \tensor \cV)$ 
 with norm $\norm{u}_{\sR} = \norm{\conn u} + \norm{u}$ 
so that $\sR' = \Sob[loc]{-1,2}(\cotanb\cM \tensor \cV)$
since $\Sob[loc]{-1,2}$ is the dual of $\Sob[comp]{1,2}$ (see Lemma 9.2.9 in \cite{BC-2009}).
 On noting that $\conn:\Lp{2}(\cV) \to \Sob[loc]{-1,2}(\cotanb\cM \tensor \cV)$,
we define $\tilde{T} = \conn$. 
Also, $\dom(S) = \Ck[c]{\infty}(\cotanb\cM \tensor\cV) \subset \Sob[comp]{1,2}(\cotanb\cM \tensor \cV) = \sR$
and
$\sR = \Sob[comp]{1,2}(\cotanb\cM \tensor \cV) \subset \Lp{2}(\cotanb\cM \tensor \cV) = \Hil_2$.
Thus, we have shown  that the hypotheses (i), (iii), (iv) of Lemma \ref{Lem:AdjDom} are satisfied.
Furthermore, hypothesis (ii) of Lemma \ref{Lem:AdjDom} is satisfied by Lemma \ref{Lem:Dual}.
Finally, the fulfilment of hypothesis (v) of Lemma \ref{Lem:AdjDom} follows from~(\ref{E:i-parts}) with $u\in \Lp{2}(\cV)$ and $v\in \Ck[c]{\infty}(\cotanb\cM \tensor\cV)$.
Thus, we conclude that $\dom(\conn[L]) = \set{u \in \Lp{2}(\cV): \conn u \in \Lp{2}(\cotanb \cM \tensor \cV)}$, and the equality $\dom(\conn[L])= \Sob{1,2}(\cV)$ follows from~(\ref{R:sob-eq}). This proves property (i).

Property (i) implies $\adj{\conn[L]} = \adj{\conn[N]}$, which immediately gives property (ii).
For property (iii), we use the $\Lp{2}$-space notations $\Hil_1=\Lp{2}(\cotanb\cM \tensor \cV)$, $\Hil_2 = \Lp{2}(\cV)$, operators $S= -\conn[c]$ and $T=\tr\conn[2]$, Sobolev space $\sR = \Sob[comp]{1,2}(\cV)$, and the set $\sR_s = \dom(S) = \Ck[c]{\infty}(\cV)$. With these notations,  property (iii) follows from Lemma~\ref{Lem:AdjDom} and Lemma~\ref{Lem:Dual} by using the same arguments as in the proof of property (i).
\end{proof}

The following proposition is a vector bundle analogue of Lemma 3 from~\cite{Masamune}.
\begin{proposition}\label{L:closures-N-D}
The following equalities  hold:
\[
\close{\Lap_{D,s}}=\Lap_{D}\quad\textrm{ and }\quad\close{\Lap_{N,s}}=\Lap_{N}.
\]
\end{proposition}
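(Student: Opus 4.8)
The plan is to prove both equalities by the same argument, so I will focus on $\close{\Lap_{D,s}} = \Lap_D$; the Neumann case is identical upon replacing $\conn[c]$, $S_1$-free compactly supported sections, etc., by their Neumann counterparts. Since $\Lap_{D,s} \subset \Lap_D$ by~(\ref{E:D-N-s}) and $\Lap_D$ is self-adjoint (hence closed), we immediately get $\close{\Lap_{D,s}} \subset \Lap_D$. The real content is the reverse inclusion $\dom(\Lap_D) \subset \dom(\close{\Lap_{D,s}})$: given $u \in \dom(\Lap_D)$, I must produce a sequence $u_k \in \dom(\Lap_D) \cap \Ck{\infty}(\cV)$ with $u_k \to u$ in $\Lp{2}$ and $\Lap_D u_k \to \Lap_D u$ in $\Lp{2}$.

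The key tool is Friedrichs mollification. First I would recall that $\dom(\Lap_D) \subset \dom(\conn[D]) = \Sob[0]{1,2}(\cV)$ (Proposition~\ref{P:basic-eql}), so $u$ and $\conn u$ both lie in $\Lp{2}$, and moreover $\conn[D] u \in \dom(\divv_D)$ with $-\divv_D \conn[D] u = \Lap_D u$. The strategy is elliptic regularity combined with mollification: locally, $\Lap_G$-type operators (here the Bochner Laplacian $\Lap_s$ on the overlap) are elliptic, so $u \in \Sob[loc]{2,2}$, in particular $u$ is already quite regular. Then one mollifies: choosing a Friedrichs mollifier adapted to the connection (as in the proof of characterisation~(\ref{R:sob-eq}), i.e. the argument behind Theorem 2 of~\cite{Masamune} and Proposition 1.4 of~\cite{E-88}), one sets $u_\epsilon = J_\epsilon u$ and checks that $u_\epsilon \in \Ck{\infty}(\cV)$, $u_\epsilon \to u$ in $\Sob[0]{1,2}(\cV)$, and — crucially — that $\Lap_D u_\epsilon = -\tr\conn^2 u_\epsilon \to \Lap_D u$ in $\Lp{2}$. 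The last convergence uses the Friedrichs commutator lemma: the commutator $[\Lap_s, J_\epsilon]$, being a first-order-symbol error term against a second-order operator, is bounded on $\Lp{2}$ with norm controlled by $\norm{u}_{\Sob{1,2}}$ and converges to $0$ strongly. One also needs $u_\epsilon \in \dom(\Lap_D)$, which follows because $u_\epsilon$ is smooth, compactly supported (mollification preserves this, or one first cuts off — see below), and hence trivially in $\dom(\Lap_D) \cap \Ck{\infty}(\cV) = \dom(\Lap_{D,s})$.

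Because the mollifier is only local, the honest argument requires an exhaustion-and-cutoff step: pick a smooth exhaustion $\chi_j$ of $\cM$ by compactly supported functions with controlled first and second derivatives, replace $u$ by $\chi_j u$, mollify, and then take a diagonal sequence. For the Neumann case one cannot cut off freely since $\dom(\Lap_N)$ need not be stable under multiplication by compactly supported functions in a way that controls $\Lap_N$ — but here the mollification/cutoff is being used only to approximate within $\Sob{1,2} = \dom(\conn[N])$ and $\dom(\Lap_N)$, and the relevant product and commutator estimates go through because $\Lap_N u = -\divv_N \conn[N] u$ with $\divv_N = \divv_{N,-} = \close{\tr\conn[c]}$ a genuine differential operator by Theorem~\ref{T:op-equality}(ii); this is precisely where that theorem is used. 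The main obstacle I anticipate is bookkeeping the commutator term $[\Lap_s, \chi_j J_\epsilon]$ so that both the cutoff error (involving $\extd \chi_j$ and $\conn^2 \chi_j$ against $\conn u$ and $u$) and the mollification error vanish in the limit along a suitable diagonal subsequence — this is routine in spirit but requires care since two parameters $\epsilon \to 0$ and $j \to \infty$ must be coordinated, and one must verify the second-order commutator estimate $\norm{[\tr\conn^2, J_\epsilon] v} \lesssim \norm{v}_{\Sob{1,2}}$ with right-hand side independent of $\epsilon$, which is the Friedrichs-type estimate underlying the whole approximation.
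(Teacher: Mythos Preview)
Your approach has a genuine gap that is specific to the geodesically incomplete setting of this paper. The exhaustion-and-cutoff step you propose---choosing a smooth exhaustion $\chi_j$ of $\cM$ by compactly supported functions ``with controlled first and second derivatives''---is precisely what is \emph{unavailable} on an incomplete manifold: near the Cauchy boundary any compactly supported $\chi_j$ equal to $1$ on an exhausting sequence of compacta must have $|\extd\chi_j|$ and $|\conn^2\chi_j|$ blowing up (already on $\cM=(0,1)$ this is clear). The commutator terms involving $\extd\chi_j\otimes\conn u$ and $(\Lap\chi_j)u$ in $\Lap_s(\chi_j u)$ therefore cannot be shown to vanish in $\Lp{2}$, and the graph-norm approximation breaks down. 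There is a second, sharper obstruction in the Neumann case: your approximants, being smooth and compactly supported, lie in $\Sob[0]{1,2}(\cV)$; since graph-norm convergence for $\Lap_N$ implies $\Sob{1,2}$-convergence (via $\|\conn v\|^2 = \inprod{\Lap_N v, v}$), this would force every $u\in\dom(\Lap_N)$ into $\Sob[0]{1,2}(\cV)$. But $\dom(\Lap_N)$ is dense in $\dom(\sqrt{\Lap_N})=\Sob{1,2}(\cV)$, so whenever $\Sob[0]{1,2}(\cV)\neq\Sob{1,2}(\cV)$---exactly the situation the proposition must cover---your scheme cannot reach a general $u\in\dom(\Lap_N)$ at all. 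Friedrichs mollification without cutoffs does not rescue this either, since on an open incomplete manifold the mollifier has no evident reason to preserve membership in $\dom(\Lap_D)$ or $\dom(\Lap_N)$, which encode implicit conditions at $\partial_C\cM$.

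The paper bypasses all of this with the heat-semigroup method (following Masamune): for $u\in\dom(\Lap_D)$ set $u_t:=e^{-t\Lap_D}u$. Functional calculus gives $u_t\in\bigcap_k\dom(\Lap_D^k)$, and iterated interior elliptic regularity then yields $u_t\in\Ck{\infty}(\cV)$, hence $u_t\in\dom(\Lap_{D,s})$. Strong continuity of the semigroup together with $\Lap_D u_t = e^{-t\Lap_D}\Lap_D u$ gives $u_t\to u$ and $\Lap_D u_t\to\Lap_D u$ in $\Lp{2}$ as $t\to 0^+$. No cutoffs, no mollifiers, and no control at the Cauchy boundary are required; the same argument works verbatim for $\Lap_N$. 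In this setting the heat-equation method is not a stylistic alternative---it is what makes the proof go through.
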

\begin{proof} As the same kind of proof applies to both equalities, we will only prove the first one.
We recall~(\ref{E:D-N-s}) and apply the heat-equation method of Masamune.
Since $-\Lap_{D}$ is a non-positive self-adjoint operator, it generates a strongly continuous
contraction semigroup $(e^{-t\Lap_{D}})_{t\geq 0}$ on $\Lp{2}(\cV)$.
Let $u\in\dom (\Lap_{D})$ be arbitrary, and consider the family $u_{t}:=e^{-t\Lap_{D}}u$.
By computing $u_t$ via the functional calculus for sectorial operators,
we can easily see that $e^{-t\Lap_D} \in\dom (\Lap_{D})$
(see \S{D} in \cite{ADMc}). 
Furthermore, since $\Lap_{D}$ is an elliptic operator, using elliptic regularity (see Corollary 7.1(b) and Corollary 7.4 in~\cite{Shubin}) we have $u_{t}\in \Ck{\infty}(\cV)$, and, therefore, $u_{t}\in \dom(\Lap_{D,s})$. Moreover, we have 
$$
u_t\to u\quad\textrm{as}\quad t\to 0^+,
$$
and since $e^{-t\Lap_{D}}$ commutes with $\Lap_{D}$
(the functional calculus commutes with the operator on its domain), we have
\[
\Lap_{D,s}u_t=\Lap_{D}u_t=e^{-t\Lap_{D}}\Lap_{D}u\to \Lap_{D}u, \quad \textrm{as }t\to 0+,
\]
where both convergence relations are understood in the $\Lp{2}$-sense. This proves the equality $\close{\Lap_{D,s}}=\Lap_{D}$.
\end{proof}

The next proposition is a vector bundle analogue of Lemma 3.6 (i) and (ii) from \cite{GM}.
\begin{proposition}\label{Prop:lapgdn} The following are equivalent:
\begin{enumerate}[(i)]
\item $\Sob[0]{1,2}(\cV)=\Sob{1,2}(\cV)$,
\item $\Lap_{D}=\Lap_{N}$,
\item $\Lap_{G}$ is self-adjoint.
\end{enumerate}
\end{proposition}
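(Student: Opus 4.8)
The plan is to prove the chain of implications $(i)\Rightarrow(ii)\Rightarrow(iii)\Rightarrow(i)$, using the operator identities from Theorem~\ref{T:op-equality} throughout, since those identities are precisely what make $\Lap_G$ tractable. The key reformulation to keep in mind is that, by definition and by Theorem~\ref{T:op-equality}(iii), $\Lap_G = -\divv_D\,\conn[N] = -\divv_{D,-}\,\conn[N]$, $\Lap_D = -\divv_D\,\conn[D]$ and $\Lap_N = -\divv_N\,\conn[N] = -\divv_{D}\,\conn[N]$ using Theorem~\ref{T:op-equality}(ii) together with $\divv_N\subset\divv_D$; in fact once we know $\divv_N=\divv_{N,-}\subset\divv_D=\divv_{D,-}$ and these first-order operators are related by adjunction, $\Lap_G$ and $\Lap_N$ have the \emph{same} formula $-\divv_D\conn[N]$ but a priori different domains, $\dom(\Lap_N)$ being the symmetrised domain. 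This is the structural picture that drives all three implications.

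First I would do $(i)\Rightarrow(ii)$. By Proposition~\ref{P:basic-eql}, $(i)$ says $\dom(\conn[D])=\dom(\conn[N])$, and since $\conn[D]\subset\conn[N]$ (Proposition~\ref{P:basic-incl}) this forces $\conn[D]=\conn[N]$. Taking adjoints of $-\conn[D]=-\conn[N]$ gives $\divv_D = \adj{(-\conn[c])}=\adj{(-\conn[2])}=\divv_N$ (using that $\conn[D],\conn[N]$ are the closures of $\conn[c],\conn[2]$, whose adjoints coincide with the adjoints of their closures). Hence $\Lap_D=-\divv_D\conn[D]=-\divv_N\conn[N]=\Lap_N$. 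Next, $(ii)\Rightarrow(iii)$: if $\Lap_D=\Lap_N$ then, by $(i)\Leftrightarrow(ii)$ or directly by the domain-of-$\sqrt{\phantom{x}}$ characterisation $\dom(\sqrt{\Lap_D})=\Sob[0]{1,2}$, $\dom(\sqrt{\Lap_N})=\Sob{1,2}$ noted before the statement, we recover $\conn[D]=\conn[N]$ and $\divv_D=\divv_N$, so $\Lap_G=-\divv_D\conn[N]=-\divv_N\conn[N]=\Lap_N$, which is self-adjoint. (If one wants to avoid invoking $(ii)\Rightarrow(i)$ here, argue instead: $\Lap_N\subset\Lap_G$ is immediate from $\divv_N\subset\divv_D$ and $\conn[N]\subset\conn[N]$; and $\Lap_G\subset\adj{\Lap_G}$ follows because $\conn[N]=\conn[L]=\adj{(-\divv_{N,-})}$ and $\divv_D=\divv_{D,-}=\close{\tr\conn[2]}$, so for $u,v\in\dom(\Lap_G)$ one gets $\inprod{\Lap_G u,v}=\inprod{\conn[N]u,\conn[N]v}=\inprod{u,\Lap_G v}$ by the adjoint relations; combined with $\Lap_N$ self-adjoint and $\Lap_N\subset\Lap_G\subset\adj{\Lap_G}\subset\adj{\Lap_N}=\Lap_N$ we get $\Lap_G=\Lap_N$.)

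The implication I expect to be the real work is $(iii)\Rightarrow(i)$, or equivalently $(iii)\Rightarrow(ii)$. Here I would start from self-adjointness of $\Lap_G=-\divv_D\conn[N]$ and show $\Lap_G=\Lap_N$. The inclusion $\Lap_N\subset\Lap_G$ holds as above. Since $\Lap_N$ is self-adjoint and $\Lap_G$ is symmetric (by the integration-by-parts computation $\inprod{\Lap_G u,v}=\inprod{\conn[N]u,\conn[N]v}$ valid for $u\in\dom(\Lap_G)$, $v\in\dom(\conn[N])\supset\dom(\Lap_G)$, using $\divv_D=\close{\tr\conn[2]}$ paired against $\conn[N]=\conn[L]=\adj{(-\close{\tr\conn[c]})}$ — this is exactly the point where Theorem~\ref{T:op-equality} is indispensable), we have $\Lap_N\subset\Lap_G\subset\adj{\Lap_G}$. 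If additionally $\Lap_G$ is self-adjoint, then $\Lap_G$ is a self-adjoint extension of the self-adjoint operator $\Lap_N$, hence $\Lap_G=\Lap_N$. From $\Lap_G=\Lap_N$ I then need to extract $\conn[N]=\conn[D]$: taking square roots, $\dom(\sqrt{\Lap_G})$ must equal both $\dom(\sqrt{\Lap_N})=\Sob{1,2}(\cV)$ and — since $\Lap_G\leq$ nothing forces it, so instead — I would argue via the closed quadratic forms. Alternatively, and more cleanly: $\Lap_N$ and $\Lap_D$ are both self-adjoint extensions of the symmetric operator $\Lap_s$ restricted suitably (cf.~\eqref{E:D-N-s} and Proposition~\ref{L:closures-N-D}), with $\Lap_D\leq\Lap_N$ in the form sense and $\dom(E_D)=\Sob[0]{1,2}\subset\Sob{1,2}=\dom(E_N)$; the Gaffney Laplacian sits between them, and self-adjointness of $\Lap_G$ squeezes $\Lap_D=\Lap_N=\Lap_G$, whence $E_D=E_N$ and therefore $\Sob[0]{1,2}(\cV)=\Sob{1,2}(\cV)$. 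The subtle point to get right is the symmetry computation for $\Lap_G$ and the justification that a symmetric operator sandwiched between a self-adjoint operator and its adjoint, which happens itself to be self-adjoint, must coincide with that self-adjoint operator — this is standard but relies on correctly identifying $\adj{\Lap_N}=\Lap_N$ and $\Lap_G\subset\adj{\Lap_N}$, the latter again coming from the adjoint formulae \eqref{Eq:AdjointN}, \eqref{Eq:AdjointD} and Theorem~\ref{T:op-equality}.
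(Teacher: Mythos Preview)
Your arguments for $(i)\Rightarrow(ii)$ and $(ii)\Rightarrow(iii)$ are correct and agree with the paper's (the paper simply records $(i)\Leftrightarrow(ii)$ as ``easy'' via $\dom(\sqrt{\Lap_D})=\Sob[0]{1,2}(\cV)$, $\dom(\sqrt{\Lap_N})=\Sob{1,2}(\cV)$, and then gets $\Lap_G=\Lap_N$ from $\conn[D]=\conn[N]$ exactly as you do).

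The gap is in $(iii)\Rightarrow(i)$. You claim that $\Lap_G$ is symmetric by the computation $\inprod{\Lap_G u,v}=\inprod{\conn[N]u,\conn[N]v}$ for $u,v\in\dom(\Lap_G)$, justifying this via Theorem~\ref{T:op-equality}. But this identity fails in general: since $-\divv_D=\adj{\conn[c]}=\adj{\conn[D]}$, its adjoint is $\conn[D]$, \emph{not} $\conn[N]$. Thus the integration-by-parts step $\inprod{-\divv_D(\conn[N]u),v}=\inprod{\conn[N]u,\conn[?]v}$ only produces $\conn[D]v$ and only for $v\in\dom(\conn[D])$; for a general $v\in\dom(\Lap_G)\subset\dom(\conn[N])$ you have no such formula. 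The identities $\conn[N]=\adj{(-\divv_{N,-})}$ and $\divv_D=\divv_{D,-}$ from Theorem~\ref{T:op-equality} do not rescue this, because the pair $(\divv_D,\conn[N])$ is not an adjoint pair unless $\conn[D]=\conn[N]$ already. Indeed the paper explicitly introduces $\Lap_G$ as a ``not necessarily symmetric'' operator, so symmetry cannot be the engine of the implication.

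The paper's route is shorter and avoids this obstacle entirely: observe that \emph{both} $\Lap_D\subset\Lap_G$ (from $\conn[D]\subset\conn[N]$) and $\Lap_N\subset\Lap_G$ (from $\divv_N\subset\divv_D$). If $\Lap_G$ is self-adjoint, taking adjoints reverses these inclusions to $\Lap_G=\adj{\Lap_G}\subset\adj{\Lap_D}=\Lap_D$ and $\Lap_G\subset\adj{\Lap_N}=\Lap_N$, whence $\Lap_D=\Lap_G=\Lap_N$ and $(ii)$ follows. Your ``alternative'' sketch at the end gestures toward a sandwich, but $\Lap_G$ does not sit between $\Lap_D$ and $\Lap_N$ in any operator-inclusion sense; rather it contains them both, and it is precisely this, together with the assumed self-adjointness of $\Lap_G$, that forces the collapse.
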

\begin{proof}
The equivalence of (i) and (ii) is easy. 
If $\Lap_{D}=\Lap_{N}$, then $\conn[N]=\conn[D]$. Hence, $-\divv_{D}=\conn[N]^*$, and, therefore, $\Lap_{G}=\Lap_{N}$. Thus, $\Lap_{G}$ is self-adjoint. As for the remaining implication, note that $\Lap_{D}\subset \Lap_{G}$ and $\Lap_{N}\subset \Lap_{G}$. Taking adjoints and using the self-adjointness of $\Lap_{D}$, $\Lap_{N}$, and $\Lap_{G}$, we get the equality (i).
\end{proof}

\begin{remark}
Note that this proposition holds
even if metric compatibility between
$\conn$ and $\mh$ is dropped,
upon defining $\divv_D$ and $\divv_N$
abstractly as adjoints of $-\conn_D$ and $-\conn_N$
respectively. 
\end{remark}
\section{Polar Boundary}
Let $\overline{\cM}$ be the metric completion of $\cM$ with respect to the Riemannian distance. We define the Cauchy boundary of $\cM$ as
\[
\partial_{C}\cM:=\overline{\cM}\backslash \cM.
\]

Following~\cite{GM}, we now give the definition of \emph{1-capacity} on $\cM$. Let $\mathcal{O}$ be the collection of all open sets of $\overline{\cM}$. For a set $\Omega\in \mathcal{O}$, we define
\[
\mathcal{L}(\Omega):=\{u\in \Sob{1,2}(\cM)\colon 0\leq u\leq 1\textrm{ and }u|_{\Omega\cap \cM}=1\}.
\]
We define the 1-capacity of $\Omega\in \mathcal{O}$ as follows:
\[
\Cpc(\Omega):=\inf_{u\in \mathcal{L}(\Omega)}\int_{\cM}(|u|^2+|\extd u|^2)\,d\mu_\mg \quad\textrm{if}\quad \mathcal{L}(\Omega)\neq\emptyset.
\]
Furthermore, define $\Cpc(\Omega)=\infty$ if $\mathcal{L}(\Omega)=\emptyset$ and $\Cpc(\emptyset)=0$. For an arbitrary set $\Sigma \subset\cM$, define
\[
\Cpc(\Sigma):=\inf_{\Omega\in\mathcal{O},\,\Sigma\subset\Omega}\Cpc(\Omega).
\]
We call $\Sigma$ \emph{polar} if $\Cpc(\Sigma)=0$. If $\Sigma=\emptyset$, we set $\Cpc(\Sigma)=0$.

We prove that polarity of the Cauchy boundary is
a sufficient condition to establish $\Sob[0]{1,2}(\cV) = \Sob{1,2}(\cV)$.
But first, we prove the following approximation lemma which
is noteworthy in its own right.

\begin{lemma}
\label{Lem:LinfDen}
The set $\Lp{\infty} \intersect \Sob{1,2}(\cV)$ is dense in $\Sob{1,2}(\cV)$.
\end{lemma}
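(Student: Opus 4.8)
The plan is to show that any section $u \in \Sob{1,2}(\cV)$ can be approximated in the $\Sob{1,2}$-norm by bounded sections lying in $\Sob{1,2}(\cV)$. By the characterisation~(\ref{R:sob-eq}), it suffices to work with $u \in \Lp{2}(\cV)$ such that $\conn u \in \Lp{2}(\cotanb\cM \tensor \cV)$, and produce bounded approximants $u_k$ with $u_k \to u$ and $\conn u_k \to \conn u$ in $\Lp{2}$. The natural device is a truncation by a Lipschitz function of the pointwise norm: fix a smooth cut-off $\eta_k: [0,\infty) \to [0,\infty)$ with $\eta_k(s) = s$ for $s \leq k$, $\eta_k(s) = k+1$ for $s \geq k+2$, $0 \leq \eta_k' \leq 1$, and set $u_k := \psi_k(\modulus{u}_{\mh}) u$ where $\psi_k(s) = \eta_k(s)/s$ for $s > 0$ and $\psi_k(0)=1$. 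Then $\modulus{u_k}_{\mh} = \eta_k(\modulus{u}_{\mh}) \leq k+1$, so $u_k \in \Lp{\infty}(\cV)$, and on the set where $\modulus{u}_{\mh} \leq k$ we have $u_k = u$.

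First I would check that $u_k \to u$ in $\Lp{2}(\cV)$: this is immediate from dominated convergence since $\modulus{u_k - u}_{\mh} \leq \modulus{u}_{\mh} + (k+1)\chi_{\{\modulus{u}_{\mh} > k\}}$ and more directly $\modulus{u_k - u}_{\mh} \leq 2\modulus{u}_{\mh}$ with pointwise convergence to $0$. Next I would establish that $u_k \in \Sob{1,2}(\cV)$ with $\conn u_k \to \conn u$ in $\Lp{2}$. The gradient $\conn u_k$ is computed by the chain rule: writing things in a local frame, $\conn u_k = \psi_k(\modulus{u}_{\mh}) \conn u + \bigl(\text{derivative of } \psi_k(\modulus{u}_{\mh})\bigr) \tensor u$, and since $\modulus{u}_{\mh}$ is a locally Lipschitz function (being the $\mh$-norm of a $\Sob[loc]{1,2}$ section, using metric compatibility of $\conn$ so that $\extd \modulus{u}_{\mh}^2 = 2\mh(\conn u, u)$ weakly, hence $\modulus{\extd \modulus{u}_{\mh}} \leq \modulus{\conn u}_{\mh}$ a.e.\ where $\modulus{u}_{\mh} > 0$), the composition is in $\Sob[loc]{1,2}$ with a controlled bound. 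On the set $\{\modulus{u}_{\mh} \leq k\}$ one has $\conn u_k = \conn u$, and on the complement $\modulus{\conn u_k}_{\mh} \leq C\modulus{\conn u}_{\mh}$ for a constant $C$ independent of $k$ (using $0 \leq \psi_k \leq 1$ and $\modulus{\psi_k'(s)s + \psi_k(s)} = \modulus{\eta_k'(s)} \leq 1$). Hence $\modulus{\conn u_k - \conn u}_{\mh} \leq (C+1)\modulus{\conn u}_{\mh}\,\chi_{\{\modulus{u}_{\mh} > k\}}$, which tends to $0$ in $\Lp{2}$ by dominated convergence since $\mu_\mg(\{\modulus{u}_{\mh} > k\}) \to 0$. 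This gives $u_k \to u$ in $\Sob{1,2}(\cV)$, and since each $u_k$ is bounded, $\Lp{\infty} \intersect \Sob{1,2}(\cV)$ is dense.

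The main obstacle is the chain-rule / truncation argument for sections of a bundle rather than for scalar functions: one must justify that $\modulus{u}_{\mh} \in \Sob[loc]{1,2}(\cM)$ with the expected gradient bound, and that post-composition with the Lipschitz function $\psi_k(s)s = \eta_k(s)$ (equivalently, the "normalised truncation" of $u$) preserves $\Sob[loc]{1,2}$ and behaves correctly under $\conn$. This is where metric compatibility $\conn \mh \equiv 0$ enters essentially — it is what makes $\mh(\conn u, u)$ the weak derivative of $\tfrac12 \modulus{u}_{\mh}^2$ — and one should either invoke a Friedrichs-mollification argument in local trivialisations (as in the references cited for~(\ref{R:sob-eq})) or reduce to the classical scalar chain rule for Sobolev functions composed with Lipschitz maps. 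Once the $\Sob[loc]{1,2}$ membership and the uniform (in $k$) pointwise gradient bound $\modulus{\conn u_k}_{\mh} \leq C \modulus{\conn u}_{\mh}$ are in hand, the convergence statements are routine applications of the dominated convergence theorem.
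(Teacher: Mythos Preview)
Your proposal is correct and follows essentially the same truncation-by-pointwise-norm strategy as the paper: define $u_k = \psi_k(\modulus{u}_{\mh})\,u$ for a suitable cutoff, invoke the diamagnetic inequality $\modulus{\extd\modulus{u}_{\mh}} \leq \modulus{\conn u}$ (which, as you note, uses metric compatibility), apply the Lipschitz chain rule, and conclude by dominated convergence. The one simplification the paper makes that you might adopt is to first reduce to $u \in S_1(\cV)$ by density, so that the diamagnetic inequality is available directly from Eichhorn's smooth version (Lemma~1.16 in \cite{E-88}) rather than requiring a separate mollification argument; the paper also cites Theorem~A of Leinfelder--Simader \cite{Le-Si} for the chain rule step, which cleanly dispatches the ``main obstacle'' you flag.
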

\begin{proof}
We use the truncation procedure of Lemma 2 in \cite{Le-Si}
by Leinfelder and Simader.
By definition of $\Sob{1,2}(\cV)$, it is enough to show that $\Lp{\infty}(\cV)\intersect \Sob{1,2}(\cV)$ is dense in $S_1(\cV)$ with respect to $\Sob{1,2}$-norm. By Lemma 1.16 in~\cite{E-88} by Eichhorn, for all $v\in \Ck{\infty}(\cV)$ we have
the following diamagnetic inequality
\begin{equation*}\label{E:diam}
\tag{$\dagger$}
|\extd|v|_{\mh}|\leq |\conn v|,\qquad\mu_\mg\textrm{-a.e.}~x\in M,
\end{equation*}
where $|\cdot|_{\mh}$ is the norm with respect to the metric $\mh$ of $\cV$.
In particular,~(\ref{E:diam}) holds for all $u\in S_1(\cV)$.  Now using~(\ref{E:diam}) and~(\ref{R:sob-eq}) together, we conclude that $|u|_{\mh}\in \Sob{1,2}(\cM)$.
For simplicity we will suppress $\mh$ in $|u|_{\mh}$ for the remainder of the proof.

For $R>0$ define the following family of Lipschitz functions:
\[
\psi_R(t)=\left\{\begin{array}{cc}
    1, \qquad &\textrm{if}\quad t\leq R;\\
    \frac{R}{t}, \qquad &\textrm{if}\quad t > R.
    \end{array}\right.
\]
Note that $\psi_R'(t)=0$ if $t<R$, $0\leq \psi_R(t)\leq 1$, $|t\psi_R(t)|\leq R$ and $|t\psi_R'(t)|\leq 1$.

We now apply Theorem A in \cite{Le-Si} to the composition $\psi_{R}\circ |u|$.
We remark that Theorem A in \cite{Le-Si} was proven for the composition
$f\circ w$,  where $f\colon \mathbb{R}^{k}\to \mathbb{R}$ is a Lipschitz
function of class $C^1(\mathbb{R}^{k}\backslash\Gamma)$ with a
closed countable set $\Gamma\subset\mathbb{R}^{k}$,
and where $w\colon\Omega\to\mathbb{R}^{k}$ belongs to the
Sobolev space $\Sob{1,2}(\Omega,\C)$ with $\Omega$ an open set
in $\mathbb{R}^{m}$. However, the corresponding arguments can be carried over
without change to the case of functions $w$ defined on a Riemannian manifold.
Hence, we obtain
\begin{equation}\label{E:eqn-1}
\extd(\psi_{R}\circ |u|)=\psi_{R}'(|u|)\extd|u|\,\qquad\mu_\mg\textrm{-a.e.}~x\in M.
\end{equation}
As in the proof of Lemma 2 in \cite{Le-Si} we set
\[
u_{R}:=(\psi_{R}\circ |u|)u.
\]
Clearly, $u_R \in\Lp{\infty}(\cV)\cap\Sob{1,2}(\cV)$. Using Leibniz rule and~(\ref{E:eqn-1}),  we have
\[
\nabla u_{R}=(\nabla u)\psi_{R}(|u|)+u\otimes(\psi_{R}'(|u|)\extd|u|),
\]
in the sense of distributional sections of $\cV$.

Let $\chi_{G}$ denote the characteristic function of a set $G$. From the properties of $\psi_{R}$  it follows that
\[
|\nabla u_{R}-\nabla u|\leq (|\nabla u|+|\extd|u||)\chi_{\{|u|\geq R\}}
\]
and
\[
|u_{R}-u|\leq |u|\chi_{\{|u|\geq R\}}.
\]
Therefore, as $R\to\infty$, we have
$\|u_{R}-u\|_{\Sob{1,2}}\to 0.$
\end{proof}

\begin{remark} 
We note that the diamagnetic inequality
\eqref{E:diam} as proved by
Eichhorn in \cite{E-88} assumes
the compatibility of $\mh$ and $\conn$.
It would be interesting to know whether
this inequality still holds
without this assumption.
\end{remark}

The following is then a vector-bundle analogue of Lemma 2.2 (a) in~\cite{GM}.

\begin{theorem}\label{T:sob-equal} If $\partial_{C}M$ is polar, then $\Sob[0]{1,2}(\cV)=\Sob{1,2}(\cV)$.
\end{theorem}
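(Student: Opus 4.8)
We must show: if $\partial_C\cM$ is polar, i.e.\ $\Cpc(\partial_C\cM)=0$, then the closure $\Sob[0]{1,2}(\cV)$ of $\Ck[c]{\infty}(\cV)$ equals $\Sob{1,2}(\cV) = \dom(\conn[N])$. The strategy is a two-stage approximation. First, by \lemref{Lem:LinfDen}, it suffices to approximate an arbitrary $u \in \Lp{\infty}\intersect\Sob{1,2}(\cV)$ by elements of $\Sob[0]{1,2}(\cV)$ in the $\Sob{1,2}$-norm. So fix such a $u$, with $\norm{u}_\infty \leq M$. The idea is to multiply $u$ by carefully chosen cutoff functions built from the definition of $1$-capacity, and then to further cut off at spatial infinity by standard exhaustion. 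The capacity cutoffs kill a neighbourhood of the Cauchy boundary at negligible Sobolev cost; the spatial cutoffs then produce genuinely compactly supported $\Sob{1,2}$ sections, which lie in $\Sob[0]{1,2}(\cV)$ by a Friedrichs mollification argument (as in the characterisation (W) and the remarks following it).

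\textbf{Step 1: extracting capacity cutoffs.} Since $\Cpc(\partial_C\cM) = 0$, for each $\epsilon > 0$ there is an open set $\Omega_\epsilon \subset \overline{\cM}$ with $\partial_C\cM \subset \Omega_\epsilon$ and a function $w_\epsilon \in \mathcal{L}(\Omega_\epsilon)$, so $0 \le w_\epsilon \le 1$, $w_\epsilon \equiv 1$ on $\Omega_\epsilon \intersect \cM$, and $\int_\cM (|w_\epsilon|^2 + |\extd w_\epsilon|^2)\,d\mu_\mg < \epsilon$. Set $\phi_\epsilon := 1 - w_\epsilon$, a function in $\Sob{1,2}(\cM)$ with $0 \le \phi_\epsilon \le 1$, $\phi_\epsilon \equiv 0$ on $\Omega_\epsilon\intersect\cM$ (a neighbourhood of the Cauchy boundary), $\extd\phi_\epsilon = -\extd w_\epsilon$, and $\norm{\phi_\epsilon - 1} \to 0$, $\norm{\extd\phi_\epsilon} \to 0$ as $\epsilon \to 0$ --- the first because $\norm{\phi_\epsilon - 1} = \norm{w_\epsilon} < \sqrt{\epsilon}$. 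Now consider $u_\epsilon := \phi_\epsilon u$. Using the Leibniz rule (valid distributionally, since $\phi_\epsilon \in \Sob{1,2}\intersect\Lp{\infty}(\cM)$ and $u \in \Sob{1,2}\intersect\Lp{\infty}(\cV)$, via (W) and a product estimate) we get $\conn u_\epsilon = \phi_\epsilon \conn u + \extd\phi_\epsilon \tensor u$. Then $\norm{u_\epsilon - u} = \norm{(\phi_\epsilon - 1)u} \le M\norm{\phi_\epsilon - 1} \to 0$, and $\norm{\conn u_\epsilon - \conn u} \le \norm{(\phi_\epsilon - 1)\conn u} + \norm{\extd\phi_\epsilon}\,\norm{u}_\infty$. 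The second term tends to $0$ since $\norm{\extd\phi_\epsilon} \to 0$; the first tends to $0$ by dominated convergence, because $(\phi_\epsilon - 1)\conn u \to 0$ $\mu_\mg$-a.e.\ along a subsequence (extract one so that $w_\epsilon \to 0$ a.e.) and is dominated by $|\conn u| \in \Lp{2}$. Hence $u_\epsilon \to u$ in $\Sob{1,2}(\cV)$.

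\textbf{Step 2: the support is away from the Cauchy boundary, hence spatial cutoffs suffice.} The key gain is that $\spt u_\epsilon$, taken inside $\overline{\cM}$, is disjoint from $\partial_C\cM$ (since $\phi_\epsilon$ vanishes on the open neighbourhood $\Omega_\epsilon\intersect\cM$ of it); but $\spt u_\epsilon$ need not be compact in $\cM$ since it may still run off to metric-space infinity within $\cM$ itself. To fix this, choose a smooth exhaustion: there exist $\chi_k \in \Ck[c]{\infty}(\cM)$ with $0 \le \chi_k \le 1$, $\chi_k \equiv 1$ on a compact set $K_k$ with $K_k \upto \cM$, and $|\extd\chi_k|$ bounded --- here we can take $\chi_k$ depending on a fixed point, built from the distance function on $\cM$, which is well-defined and locally Lipschitz even in the incomplete case. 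Then $u_{\epsilon,k} := \chi_k u_\epsilon \in \Sob[c]{1,2}(\cV)$, and a Friedrichs mollification of $\chi_k u_\epsilon$ (which has support compactly contained in $\cM$) lies in $\Ck[c]{\infty}(\cV)$ and approximates it in $\Sob{1,2}$-norm; thus $u_{\epsilon,k} \in \Sob[0]{1,2}(\cV)$. Finally $u_{\epsilon,k} \to u_\epsilon$ in $\Sob{1,2}(\cV)$ as $k \to \infty$, again by a Leibniz-rule computation: $\norm{u_{\epsilon,k} - u_\epsilon} \to 0$ by dominated convergence, and $\norm{\conn u_{\epsilon,k} - \conn u_\epsilon} \le \norm{(\chi_k - 1)\conn u_\epsilon} + \sup_k\norm{\extd\chi_k}_\infty \cdot \norm{u_\epsilon \ind{\{\chi_k < 1\}}} \to 0$. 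Diagonalising over $\epsilon$ and $k$ gives $u \in \Sob[0]{1,2}(\cV)$, and since $u$ was an arbitrary element of the dense set $\Lp{\infty}\intersect\Sob{1,2}(\cV)$, we conclude $\Sob[0]{1,2}(\cV) = \Sob{1,2}(\cV)$.

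\textbf{Main obstacle.} The delicate point is Step 2: a capacity cutoff controls the Cauchy boundary but not spatial infinity of $\cM$, so one needs a genuinely compactly-supported exhaustion with uniformly bounded gradients, and the justification that a section in $\Sob[c]{1,2}(\cV)$ with support compactly contained in $\cM$ actually lies in $\Sob[0]{1,2}(\cV)$ --- this is where Friedrichs mollification (and the fact underlying (W)) is essential, and it is exactly the step where one must be careful that no boundary effects of $\cM$ intervene, which they do not precisely because the support is now a compact subset of the open manifold. A secondary technical point is the distributional Leibniz rule for the product of an $\Lp{\infty}\intersect\Sob{1,2}$ function with an $\Lp{\infty}\intersect\Sob{1,2}$ section; this follows from the scalar-valued case applied in local trivialisations together with the connection's derivation property.
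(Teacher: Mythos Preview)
Your argument is essentially the paper's proof (which in turn follows Grigor'yan--Masamune): reduce to $u\in\Lp{\infty}\cap\Sob{1,2}(\cV)$ via \lemref{Lem:LinfDen}, kill a neighbourhood of $\partial_C\cM$ using capacity cutoffs $1-w_\epsilon$, then truncate spatially with distance-based cutoffs and conclude membership in $\Sob[0]{1,2}(\cV)$.

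One imprecision in your Step~2 is worth flagging. You ask for $\chi_k\in\Ck[c]{\infty}(\cM)$ with $\chi_k\equiv 1$ on a compact exhaustion and with \emph{uniformly} bounded gradients, built from the distance function. These two requirements pull against each other on an incomplete manifold: a cutoff like $\sigma_r(x)=\min\{1,(r^{-1}(2r-d(x,x_0)))_+\}$ (the paper's choice) is Lipschitz with $|\extd\sigma_r|\le 1/r$, but its support $\bar B(x_0,2r)$ need not be compact in $\cM$; conversely, genuinely compactly supported smooth cutoffs adapted to an exhaustion will generally have gradients blowing up near $\partial_C\cM$ (think $\cM=(0,1)$). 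The paper resolves this by using the Lipschitz $\sigma_r$ and observing that it is the \emph{product} $\sigma_r u_\epsilon$ that has compact support in $\cM$: its support is closed in $\overline{\cM}$, bounded, and disjoint from $\partial_C\cM$ (since $u_\epsilon$ already vanishes on $\Omega_\epsilon\cap\cM$), hence compact. Once you make that adjustment---use $\sigma_r$ for the gradient control and let the capacity cutoff supply compactness of the product---your Friedrichs-mollification remark is exactly what justifies $\sigma_r u_\epsilon\in\Sob[0]{1,2}(\cV)$, and your proof coincides with the paper's.
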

\begin{proof}
The proof mimics that of Lemma 2.2(a) in~\cite{GM}.
For an arbitrary $u\in \Sob{1,2}(\cV)$ we construct a sequence $u_{j}\in \Sob[0]{1,2}(\cV)$ such that $\|u_j-u\|_{\Sob{1,2}}\to 0$ as $j\to\infty$.
By Lemma \ref{Lem:LinfDen}, we assume that $u\in\Lp{\infty}(\cV)\cap\Sob{1,2}(\cV)$.

Since $\Cpc(\partial_{C}M)=0$ there exists a sequence of open sets $\Omega_{k}\subset \overline{M}$, $k\geq 1$, such that $\partial_{C}M\subset  \Omega_{k+1}\subset \Omega_{k}$ and $\Cpc(\Omega_k)\to 0$ as $k\to\infty$. For each $k\geq 1$, let $\{\varphi_{j}^{(k)}\}_{j\geq 1}$ be a sequence of functions with the following properties:  $\varphi_{j}^{(k)}\in\mathcal{L}(\Omega_{k})$ and $\|\varphi_{j}^{(k)}\|_{\Sob{1,2}}\to \Cpc(\Omega_k)$, as $j\to\infty$. Define
\[
\varphi_{j}:=\varphi_{j}^{(j)}\quad\textrm{and}\quad  u_j:=(1-\varphi_{j})u.
\]
Fix a point $x_0\in \cM$ and define for
$r > 0$:

$$
\sigma_r(x):=\min\set{1,\ (r^{-1}(2r-d(x,x_0)))_{+}}, 
$$

where $d(\cdot,\cdot)$ is the distance with respect to the metric on $\cM$ and $f_{+}$ is the positive part of a function $f$.

Clearly, $\sigma_{r}\in \Sob{1,2}(\cM)$ for all $r>0$, $\sigma_{r}\to 1$, and $\extd\sigma_r\to 0$ as $r\to\infty$ in the $\mu_\mg$-a.e. sense. We define $v_{r,j}:=\sigma_r u_j$ and observe that $v_{r,j}\in \Sob[0]{1,2}(\cV)$ and $\|v_{r,j}-u_j\|_{{\Sob{1,2}}}\to 0$ as $r\to\infty$. Thus, we may assume (without loss of generality) that $u_j\in \Sob[0]{1,2}(\cV)$.

Since $(1-\varphi_{j})\to 1$ $\mu_\mg$-a.e. as $j\to\infty$, it follows that
\[
\|u_j-u\|_{2}\to 0, \quad\textrm{as }j\to\infty.
\]
Finally, using the properties $\varphi_{j}\to 0$ $\mu_\mg$-a.e., $\conn u\in \Lp{2}(\cotanb\cM \tensor \cV)$, $u\in \Lp{\infty}(\cV)$ and $\|\extd\varphi_j\|_{2}\to 0$, we obtain
\[
\|\conn u_j\|_{2}=\|(1-\varphi_{j})\conn u-\extd\varphi_{j}\otimes u\|_{2}\to \|\conn u\|_{2},
\]
as $j\to \infty$.
\end{proof}

The following corollary then follows directly from
Proposition~\ref{Prop:lapgdn} and Theorem~\ref{T:sob-equal}.
\begin{corollary} If $\partial_{C}M$ is polar, then $\Lap_{G}$ is self-adjoint.
\end{corollary}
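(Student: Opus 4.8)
The plan is to chain together the two results that immediately precede this corollary, so that essentially nothing new needs to be proved. First I would invoke Theorem~\ref{T:sob-equal}: under the hypothesis that $\partial_{C}M$ is polar, i.e. $\Cpc(\partial_{C}M) = 0$, we obtain the Sobolev space identity $\Sob[0]{1,2}(\cV) = \Sob{1,2}(\cV)$. This is the substantive input, and it is where all the work actually lives (truncation via Lemma~\ref{Lem:LinfDen}, the capacity cutoffs $\varphi_j$, and the distance cutoffs $\sigma_r$).

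Next I would feed this identity into Proposition~\ref{Prop:lapgdn}, which establishes the equivalence of the three conditions (i) $\Sob[0]{1,2}(\cV) = \Sob{1,2}(\cV)$, (ii) $\Lap_D = \Lap_N$, and (iii) $\Lap_G$ is self-adjoint. Since (i) holds by Theorem~\ref{T:sob-equal}, we conclude (iii): $\Lap_G$ is self-adjoint. That completes the argument; it is a two-line deduction.

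I do not anticipate any genuine obstacle at this stage, since the corollary is explicitly flagged in the excerpt as following ``directly'' from Proposition~\ref{Prop:lapgdn} and Theorem~\ref{T:sob-equal}. The only thing to be mildly careful about is purely notational consistency --- matching $\partial_{C}M$ in the statement with $\partial_{C}\cM$ as used elsewhere, and making sure the manifold/bundle hypotheses from \S\ref{S:prelim} (smooth $(\cM,\mg)$ without boundary, $(\cV,\mh,\conn)$ with $\conn$ compatible with $\mh$) are in force, as they are throughout. If one wanted to be fully self-contained one could briefly recall \emph{why} (i) implies (iii) --- namely that $\Sob[0]{1,2}(\cV) = \Sob{1,2}(\cV)$ forces $\conn[D] = \conn[N]$, hence $-\divv_D = \adj{\conn[N]}$ and therefore $\Lap_G = -\divv_D\conn[N] = \Lap_N$, which is self-adjoint --- but this is already contained in the proof of Proposition~\ref{Prop:lapgdn}, so citing it suffices.
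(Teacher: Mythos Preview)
Your proposal is correct and matches the paper's approach exactly: the corollary is stated without proof, noted only as following directly from Proposition~\ref{Prop:lapgdn} and Theorem~\ref{T:sob-equal}, which is precisely the two-step chain you describe.
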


We remark on our results here that,
much in the same way that the assumption of completeness 
yields the essential self-adjointness  
of the Bochner Laplacian (or equivalently
the self-adjointness of the Gaffney Laplacian) in \cite{Bandara-13}, the
polarity of Cauchy boundary is very much at the level of the length
structure of the manifold, and it is a sufficiently strong condition to
assert the self-adjointness of the Gaffney Laplacian.  
\section{Negligible Boundary}\label{S:neg-bound}

Let $(\cM,\cV,\conn)$ be as in \S\ref{S:prelim} and
recall the set
$$S_1(\cV) =  \set{u \in \Ck{\infty} \intersect \Lp{2}(\cV):
	\conn u \in \Ck{\infty}\intersect \Lp{2}(\cotanb\cM\tensor\cV)}.$$

We say that $(\cM,\cV,\conn)$ has \emph{negligible boundary} if
\begin{equation*}\label{E:n-b}
\tag{NB}
\inprod{\conn[2] u, v} = \inprod{u, -\tr\conn[2] v},
\quad \text{for all}\ u \in S_1(\cV),\ v \in S_1(\cotanb\cM\tensor \cV).
\end{equation*}

This definition is analogous to the one used by Masamune
in \cite{Masamune-99, Masamune}
in the study of the self-adjointness of the Laplacian acting on functions.
The term ``negligible boundary" goes back to
the work of Gaffney in \cite{Gaffney}.
In this section, we will illustrate the link between this
geometric condition \eqref{E:n-b}
and the equality $\Sob[0]{1,2}(\cV) = \Sob{1,2}(\cV)$.

First, we consider the relationship of $\eqref{E:n-b}$
to operators that we have introduced previously.
The following is the link between
negligible boundary and the Gaffney Laplacian.

\begin{theorem}\label{T:self-G} The operator $\Lap_{G}$ is self-adjoint if and only if $(\cM,\cV,\conn)$ has negligible boundary.
\end{theorem}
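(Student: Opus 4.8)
The plan is to connect the negligible boundary condition \eqref{E:n-b} to the operator-theoretic statement that $\Lap_G = -\divv_D \conn[N]$ is self-adjoint, using the general machinery already developed in \S\ref{S:gen-results}. The key observation is that \eqref{E:n-b} is precisely the assertion that $\conn[2]$ and $-\tr\conn[2]$ are \emph{formally adjoint} on their natural domains $S_1(\cV)$ and $S_1(\cotanb\cM\tensor\cV)$; equivalently, passing to closures, that $\conn[N] = \close{\conn[2]}$ and $\divv_{D,-} = \close{\tr\conn[2]}$ satisfy $\conn[N] \subset \adj{(-\divv_{D,-})}$ and $\divv_{D,-} \subset \adj{(-\conn[N])}$. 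Now by Theorem~\ref{T:op-equality}(iii), $\divv_D = \divv_{D,-}$, so $\adj{(-\divv_{D,-})} = \adj{(-\divv_D)} = \conn[L]$, and by Theorem~\ref{T:op-equality}(i), $\conn[L] = \conn[N]$. This is the engine that should convert the bilinear identity \eqref{E:n-b} into an operator equality.

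First I would prove the forward direction: assume $(\cM,\cV,\conn)$ has negligible boundary. From \eqref{E:n-b} applied with $v$ ranging over $S_1(\cotanb\cM\tensor\cV)$, we read off that for every $u \in S_1(\cV)$, the functional $v \mapsto \inprod{\conn[2]u, v}$ extends to a bounded functional in the $\Lp{2}$-variable $v$ — indeed it equals $\inprod{u, -\tr\conn[2]v}$, which is controlled by $\norm{u}\,\norm{\tr\conn[2]v}$ — hence $\conn[2]u \in \dom(\adj{(-\tr\conn[2])})$; but more directly, \eqref{E:n-b} says exactly that $\conn[2] \subset \adj{(-\close{\tr\conn[2]})} = \adj{(-\divv_{D,-})} = \conn[L] = \conn[N]$, and since $\conn[N] = \close{\conn[2]}$ is the closure, combined with the general inclusion $\conn[N]\subset\conn[L]$ this forces $\conn[N] = \conn[L]$ — wait, that's automatic. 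The real content: \eqref{E:n-b} upgrades to the statement $\conn[2]$ is symmetric-paired with $-\tr\conn[2]$, which after taking closures and invoking Theorem~\ref{T:op-equality} yields $\divv_D \subset \adj{(-\conn[N])} = \adj{\conn[N]}$ in the appropriate sign convention; combined with the standard inclusion $-\divv_D \supset \adj{\conn[N]}$ coming from $\conn[D]\subset\conn[N]$, we get $-\divv_D = \adj{\conn[N]}$, whence $\Lap_G = -\divv_D\conn[N] = \adj{\conn[N]}\conn[N]$ is self-adjoint by the von Neumann theorem on $\adj{T}T$. I would package the sign-bookkeeping carefully but it is routine.

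For the converse, assume $\Lap_G = -\divv_D\conn[N]$ is self-adjoint. Since $\Lap_N = -\divv_N\conn[N]$ and $\conn[D]\subset\conn[N]$ gives $\divv_N\subset\divv_D$ (Proposition~\ref{P:basic-incl}), we have $\Lap_N \subset \Lap_G$; similarly $\Lap_D \subset \Lap_G$. Taking adjoints and using self-adjointness of all three operators yields $\Lap_G \subset \Lap_N$ and $\Lap_G\subset\Lap_D$, hence $\Lap_D = \Lap_N = \Lap_G$. By Proposition~\ref{Prop:lapgdn} this is equivalent to $\Sob[0]{1,2}(\cV) = \Sob{1,2}(\cV)$, i.e.\ $\conn[D] = \conn[N]$. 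It then remains to extract \eqref{E:n-b} from $\conn[D]=\conn[N]$: since $S_1(\cV)\subset\dom(\conn[N])=\dom(\conn[D])$, every $u\in S_1(\cV)$ is approximable in $\Sob{1,2}$-norm by $u_n\in\Ck[c]{\infty}(\cV)$, and likewise every $v\in S_1(\cotanb\cM\tensor\cV)$ by $v_n$; the adjoint formula \eqref{Eq:AdjointN} (or \eqref{Eq:AdjointD}) holds for compactly supported test sections, and one passes to the limit using $\conn u_n \to \conn u$, $\tr\conn v_n\to\tr\conn v$ in $\Lp{2}$ — but this last convergence requires knowing $v\in\dom(\divv_{D,-})$ with $v_n\to v$ in the graph norm of $\tr\conn$, which is where Theorem~\ref{T:op-equality}(iii) ($\divv_D = \divv_{D,-}$) is needed to guarantee the density.

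The main obstacle is the converse direction's final step: ensuring the approximating sequences converge in the \emph{right} norms simultaneously for both $u$ and $v$, so that the limiting pairing identity \eqref{E:n-b} is legitimate rather than merely formal. Concretely, one must show that $v\in S_1(\cotanb\cM\tensor\cV)$ lies in $\dom(\close{\tr\conn[c]}) = \divv_{N,-} = \divv_N$ (using Theorem~\ref{T:op-equality}(ii)) and is approximated by compactly supported smooth sections in the graph norm — this is exactly the kind of density statement that Proposition~\ref{L:closures-N-D} ($\close{\Lap_{N,s}} = \Lap_N$) and the \eqref{E:i-parts} integration-by-parts lemma are designed to supply, so I would route the argument through those rather than attempting a bare hands mollification.
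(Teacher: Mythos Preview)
Your argument for the direction \eqref{E:n-b} $\Rightarrow$ self-adjoint is correct and matches the paper's: from \eqref{E:n-b} one reads $-\tr\conn[2] \subset \adj{\conn[2]}$, whence $-\divv_{D,-} \subset \adj{\conn[N]}$, and then Theorem~\ref{T:op-equality}(iii) together with the standard reverse inclusion $\adj{\conn[N]} = -\divv_N \subset -\divv_D$ forces $-\divv_D = \adj{\conn[N]}$, so $\Lap_G = \adj{\conn[N]}\conn[N]$ is self-adjoint by von Neumann.

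For the converse direction you have manufactured an obstacle that is not there. Once you have $\conn[D] = \conn[N]$, you only need to approximate $u \in S_1(\cV)$ by $u_j \in \Ck[c]{\infty}(\cV)$ in the $\Sob{1,2}$-norm; there is no need to touch $v$ at all. The adjoint formula \eqref{Eq:AdjointD} already holds for $w \in \Ck[c]{\infty}(\cV)$ paired against an \emph{arbitrary} $z \in S_1(\cotanb\cM\tensor\cV)$, so for each fixed $v \in S_1(\cotanb\cM\tensor\cV)$ you have $\inprod{\conn[c] u_j, v} = \inprod{u_j, -\tr\conn[2] v}$, and the limit $j\to\infty$ is immediate: the left side converges because $\conn u_j \to \conn u$ in $\Lp{2}$, the right side because $u_j \to u$ in $\Lp{2}$ and $-\tr\conn[2]v$ is a fixed $\Lp{2}$-section. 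This is exactly what the paper does. Your proposed route through simultaneous approximation of $v$ and the identity $\divv_N = \divv_{N,-}$ would also work (and indeed $\conn[D]=\conn[N]$ collapses all four divergence operators), but it is unnecessary detour.
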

\begin{proof}  If $\Lap_{G}$ is self-adjoint, by Proposition~\ref{Prop:lapgdn} we get $\Sob[0]{1,2}(\cV)=\Sob{1,2}(\cV)$. Now Proposition~\ref{P:basic-eql} implies $\conn[D]=\conn[N]$. To verify the property~(\ref{E:n-b}), we first approximate $u\in S_1(\cV)$ by a sequence $u_j\in \Ck[c]{\infty}(\cV)$ in $W^{1,2}$-norm. Next, 
we recall the property ~(\ref{Eq:AdjointD})  which is a consequence
of the compatibility of $\conn$ and $\mh$:
\[
\inprod{\conn[c] u_j, v} = \inprod{u_j, -\tr\conn[2] v},
\quad \textrm{for all } v \in S_1(\cotanb\cM\tensor \cV).
\]
Finally, we take the limit as $j\to\infty$ on both sides, and this shows~(\ref{E:n-b}).

Now assume that $(\cM,\cV,\conn)$ has negligible boundary. Our goal is to show that $-\divv_{D}=\conn[2]^*$. From~(\ref{E:n-b}) it follows that $-\tr\conn[2]\subset \conn[2]^{*}$, which, after taking closures, leads to  $-\close{\tr\conn[2]}\subset \conn[2]^{*}$. By Theorem~\ref{T:op-equality} (iii), we can rewrite the last inclusion as $-\divv_{D}\subset \conn[2]^{*}$. Additionally, by Proposition~\ref{P:basic-incl} we have $-\divv_{N}\subset-\divv_{D}$, that is, $\conn[2]^{*}\subset-\divv_{D}$. Thus, we have shown that $-\divv_{D}=\conn[2]^*$. Noting that $\conn[2]^*=\conn[N]^*$, we have
\[
\Lap_{G}=\Lap_G = -\divv_D \conn[N]=\conn[2]^{*}\conn[N]=\conn[N]^*\conn[N].
\]
Now the self-adjointness of $\Lap_{G}$ follows by von Neumann's Theorem; see Theorem V.3.24 in~\cite{Kato}.
\end{proof}

The following theorem then links \eqref{E:n-b}
to the Bochner Laplacian $\Lap_{s}$.

\begin{theorem}\label{T:esa-Lap-G-infty} The operator $\Lap_{s}$ is essentially self-adjoint if and only if $(\cM,\cV,\conn)$ has negligible boundary.
\end{theorem}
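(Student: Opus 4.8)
The plan is to prove the equivalence $\Lap_s$ essentially self-adjoint $\iff$ $(\cM,\cV,\conn)$ has negligible boundary by relating $\Lap_s$ to the already-understood operators $\Lap_D$, $\Lap_N$, $\Lap_G$. Recall from~\eqref{E:D-N-s} that $\Lap_{D,s}\subset\Lap_s$ and $\Lap_{N,s}\subset\Lap_s$, and from Proposition~\ref{L:closures-N-D} that $\close{\Lap_{D,s}}=\Lap_D$ and $\close{\Lap_{N,s}}=\Lap_N$. Hence $\Lap_D=\close{\Lap_{D,s}}\subset\close{\Lap_s}$ and $\Lap_N=\close{\Lap_{N,s}}\subset\close{\Lap_s}$, so $\close{\Lap_s}$ is a common closed extension of the two self-adjoint operators $\Lap_D$ and $\Lap_N$. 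I would also observe that $\Lap_s\subset\Lap_G$: indeed for $u\in\dom(\Lap_s)$ we have $u\in S_1(\cV)$ with $\conn u\in S_1(\cotanb\cM\tensor\cV)$, so $\conn[N]u=\conn u$ and $-\tr\conn[2]\conn u$ agrees with $-\divv_D\conn[N]u$ by the inclusion $\divv_{D,-}\subset\divv_D$ (Proposition~\ref{P:basic-incl}); thus $\close{\Lap_s}\subset\Lap_G$ as well.

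For the forward direction, suppose $\Lap_s$ is essentially self-adjoint. Then $\close{\Lap_s}$ is self-adjoint, and since it is sandwiched $\Lap_D\subset\close{\Lap_s}\subset\Lap_G$ with $\Lap_D$ self-adjoint, taking adjoints gives $\close{\Lap_s}=\adj{(\close{\Lap_s})}\supset\adj{\Lap_D}=\Lap_D$, forcing $\Lap_D=\close{\Lap_s}$; likewise $\Lap_N=\close{\Lap_s}$. Therefore $\Lap_D=\Lap_N$, which by Proposition~\ref{Prop:lapgdn} yields $\Lap_G$ self-adjoint, and then Theorem~\ref{T:self-G} gives negligible boundary. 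Conversely, if $(\cM,\cV,\conn)$ has negligible boundary, Theorem~\ref{T:self-G} gives $\Lap_G$ self-adjoint, hence by Proposition~\ref{Prop:lapgdn} $\Lap_D=\Lap_N=:\Lap$, a self-adjoint operator. Now $\Lap=\close{\Lap_{D,s}}\subset\close{\Lap_s}\subset\Lap_G=\Lap$, which pins down $\close{\Lap_s}=\Lap$, a self-adjoint operator; that is, $\Lap_s$ is essentially self-adjoint.

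The step I expect to require the most care is verifying the chain of inclusions $\close{\Lap_{D,s}}\subset\close{\Lap_s}$ and $\close{\Lap_s}\subset\Lap_G$ rigorously — in particular, checking that $\Lap_{D,s}\subset\Lap_s$ (which~\eqref{E:D-N-s} asserts but whose proof uses that smooth $\Lap_D$-domain elements lie in $\dom(\Lap_s)$, relying on elliptic regularity as in Proposition~\ref{L:closures-N-D}), and checking that on $\dom(\Lap_s)$ the operator $\Lap_s$ genuinely coincides with $-\divv_D\conn[N]$. The latter amounts to confirming $-\tr\conn[2]\xi=-\divv_D\xi$ for $\xi=\conn u\in S_1(\cotanb\cM\tensor\cV)$, which is exactly the inclusion $\divv_{D,-}\subset\divv_D$ from Proposition~\ref{P:basic-incl} applied to the bundle $\cotanb\cM\tensor\cV$; I would state this carefully so the sandwich argument is airtight. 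Alternatively, one can bypass $\Lap_G$ entirely and argue purely with the two-sided sandwich $\Lap_D\subset\close{\Lap_s}$ and $\Lap_N\subset\close{\Lap_s}$ together with $\close{\Lap_s}\subset\adj{\Lap_{D,s}}=\adj{\Lap_D}=\Lap_D$ — wait, that last adjoint inclusion needs $\Lap_s\subset\adj{\Lap_{D,s}}$, i.e. $\Lap_{N,s}$ and $\Lap_{D,s}$ are formally adjoint, which follows from~\eqref{Eq:AdjointN}; this gives a self-contained route that I would fall back on if the $\Lap_G$ comparison proves awkward.
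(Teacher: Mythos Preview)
Your argument is correct and very close to the paper's. One slip to fix: when you write ``taking adjoints gives $\close{\Lap_s}=\adj{(\close{\Lap_s})}\supset\adj{\Lap_D}=\Lap_D$'', the inclusion goes the wrong way. From $\Lap_D\subset\close{\Lap_s}$ one obtains $\adj{(\close{\Lap_s})}\subset\adj{\Lap_D}=\Lap_D$, hence $\close{\Lap_s}\subset\Lap_D$, and \emph{that} combined with $\Lap_D\subset\close{\Lap_s}$ forces equality. This is the standard ``a self-adjoint operator has no proper self-adjoint (or symmetric) extension'' fact; you clearly have it in mind, just with a reversed symbol.

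The only real difference from the paper is in the direction ``negligible boundary $\Rightarrow$ essential self-adjointness''. The paper verifies directly from~\eqref{E:n-b} that $\Lap_s$ is symmetric, then uses $\Lap_D\subset\close{\Lap_s}$ with $\Lap_D$ self-adjoint and $\close{\Lap_s}$ symmetric to conclude $\Lap_D=\close{\Lap_s}$. You instead invoke Theorem~\ref{T:self-G} and Proposition~\ref{Prop:lapgdn} to get $\Lap_G=\Lap_D=\Lap_N$, then use your sandwich $\Lap_D\subset\close{\Lap_s}\subset\Lap_G=\Lap_D$. Both routes are short; yours has the advantage of not re-deriving symmetry of $\Lap_s$ (it is implicit in $\Lap_s\subset\Lap_G$ once $\Lap_G$ is self-adjoint), while the paper's is slightly more self-contained and does not require the inclusion $\Lap_s\subset\Lap_G$ or the identification $\Lap_G=\Lap_D$. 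Your verification of $\Lap_s\subset\Lap_G$ via $\divv_{D,-}\subset\divv_D$ is fine. The ``fallback'' in your last paragraph, however, does not work without~\eqref{E:n-b}: the inclusion $\Lap_s\subset\adj{\Lap_{D,s}}$ would require $\inprod{\Lap_s v,u}=\inprod{v,\Lap_{D,s}u}$ for $u\in\dom(\Lap_{D,s})$, and elements of $\dom(\Lap_{D,s})$ need not be compactly supported, so~\eqref{Eq:AdjointN} alone does not give this.
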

\begin{proof} Assume that $(\cM,\cV,\conn)$ has negligible boundary. We will first show that $\Lap_{s}$ is symmetric.  For $w,\,z\in
\dom(\Lap_{s})$, we have
\[
\inprod{-\tr\conn[2](\conn[2]w), z} = \inprod{\conn[2]w,\conn[2]z}=\inprod{w,-\tr\conn[2](\conn[2]z)},
\]
where in the first equality we used~(\ref{E:n-b}) with $u=z$ and $v=\conn[2]w$, and in the second equality we used~(\ref{E:n-b}) with $u=w$ and $v=\conn[2]z$. This shows that $\Lap_{s}$ is symmetric. We now show that $\close{\Lap_{s}}=\Lap_{G}$. Taking closures in~(\ref{E:D-N-s}) and using Proposition~\ref{L:closures-N-D} we obtain
\begin{equation}\label{E:D-N-G}
\Lap_{D}\subset\close{\Lap_{s}}\quad\textrm{ and }\quad\Lap_{N}\subset\close{\Lap_{s}}.
\end{equation}
Since $\Lap_{s}$ is symmetric,  so is $\close{\Lap_{s}}$. Since $\Lap_{D}$ and $\Lap_{N}$ are self-adjoint and $\close{\Lap_{s}}$ is symmetric, from~(\ref{E:D-N-G}) we get $\Lap_{D}=\close{\Lap_{s}}=\Lap_{N}$. Therefore, $\close{\Lap_{s}}$ self-adjoint, that is, $\Lap_{s}$ is essentially self-adjoint.

Now assume that $\Lap_{s}$ is essentially self-adjoint. Taking closures in~(\ref{E:D-N-s}) and using Proposition~\ref{L:closures-N-D} we obtain~(\ref{E:D-N-G}), which leads to $\Lap_{D}=\close{\Lap_{s}}=\Lap_{N}$, that is, $\Sob[0]{1,2}(\cV)=\Sob{1,2}(\cV)$.  Now by
Theorem \ref{T:self-G} and Proposition \ref{Prop:lapgdn}, it follows that $(\cM,\cV,\conn)$ has negligible boundary.
\end{proof}

To summarise, we present the following list of
equivalences.
We note that this easily follows from Theorem~\ref{T:self-G},
\ref{T:esa-Lap-G-infty} and Proposition~\ref{Prop:lapgdn}.

\begin{corollary}
\label{Cor:TheEnd}
The following equivalences hold:
\begin{enumerate}[(i)]
\item the triplet $(\cM,\cV,\conn)$ has negligible boundary,
\item $\Sob[0]{1,2}(\cV)=\Sob{1,2}(\cV)$,
\item the operator $\Lap_{G}$ is self-adjoint,
\item the operator $\Lap_{s}$ is essentially self-adjoint.
\end{enumerate}
\end{corollary}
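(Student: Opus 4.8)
The plan is to assemble the corollary directly from the three principal results established above; no new argument is needed, since each of those results already supplies one of the required biconditionals.

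First I would record that Theorem~\ref{T:self-G} is exactly the statement (i) $\Leftrightarrow$ (iii): the Gaffney Laplacian $\Lap_G$ is self-adjoint if and only if $(\cM,\cV,\conn)$ has negligible boundary. Next, Proposition~\ref{Prop:lapgdn} asserts the equivalence of the three conditions $\Sob[0]{1,2}(\cV)=\Sob{1,2}(\cV)$, $\Lap_D=\Lap_N$, and self-adjointness of $\Lap_G$; retaining only the first and third of these yields (ii) $\Leftrightarrow$ (iii). Finally, Theorem~\ref{T:esa-Lap-G-infty} states that $\Lap_s$ is essentially self-adjoint if and only if $(\cM,\cV,\conn)$ has negligible boundary, i.e.\ (i) $\Leftrightarrow$ (iv). Concatenating these three biconditionals gives (ii) $\Leftrightarrow$ (iii) $\Leftrightarrow$ (i) $\Leftrightarrow$ (iv), which is precisely the asserted equivalence of all four statements.

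The only point to check — and it is a trivial one — is that the conditions labelled (i)–(iv) here coincide verbatim with those in the cited results, so that the biconditionals may be chained without any reformulation: ``negligible boundary of $(\cM,\cV,\conn)$'' is the defining property~\eqref{E:n-b}, condition (ii) is literally Proposition~\ref{Prop:lapgdn}(i), and conditions (iii) and (iv) match verbatim the hypotheses of Theorems~\ref{T:self-G} and~\ref{T:esa-Lap-G-infty}. Consequently I do not anticipate any genuine obstacle; the corollary is a synthesis of the preceding development rather than a fresh result, and its proof amounts to a short citation of the three statements together with the transitivity of logical equivalence.
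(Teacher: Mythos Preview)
Your proposal is correct and matches the paper's own approach exactly: the paper simply remarks that the corollary ``easily follows from Theorem~\ref{T:self-G}, \ref{T:esa-Lap-G-infty} and Proposition~\ref{Prop:lapgdn},'' which is precisely the chain of biconditionals you have spelled out. Your version is more explicit about which pair of statements each cited result links, but there is no difference in substance.
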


We conclude this paper with the following natural
questions that have risen out of our analysis. 

\begin{question}
\label{Q:1}
Does there exist a manifold $(\cM,\mg)$
and two vector bundles $(\cV_1,\mh_1,\conn_1)$
and $(\cV_2, \mh_2,\conn_2)$
with $\mh_i$ and $\conn_i$ compatible ($i = 1,2$), so that
$\Sob[0]{1,2}(\cV_1) = \Sob{1,2}(\cV_1)$
but $\Sob[0]{1,2}(\cV_2) \neq \Sob{1,2}(\cV_2)$?
\end{question}

\begin{question}
Does there exist a manifold $(\cM,\mg)$
and a vector bundle $(\cV, \mh, \conn)$
with $\conn$ and $\mh$ compatible
such that  either:
$\Sob[0]{1,2}(\cM) = \Sob{1,2}(\cM)$
	and $\Sob[0]{1,2}(\cV) \neq \Sob{1,2}(\cV)$, or,
$\Sob[0]{1,2}(\cV) = \Sob{1,2}(\cV)$
	and $\Sob[0]{1,2}(\cM) \neq \Sob{1,2}(\cM)$?
\end{question}

Thus, it is necessary that 
these questions be considered
only in the case that $(\cM,\mg)$
is geodesically incomplete. 
In this situation,
we do not expect the negligible
boundary property for one vector bundle
to necessarily follow from another.
Hence, by
Corollary \ref{Cor:TheEnd},
we at least expect Question
\ref{Q:1} to have an affirmative answer.
\providecommand{\bysame}{\leavevmode\hbox to3em{\hrulefill}\thinspace}
\providecommand{\MR}{\relax\ifhmode\unskip\space\fi MR }
\providecommand{\MRhref}[2]{%
  \href{http://www.ams.org/mathscinet-getitem?mr=#1}{#2}
}
\providecommand{\href}[2]{#2}

\setlength{\parskip}{0mm}

\end{document}